\newtheorem{thm}{Theorem}[section]
\newtheorem{cor}[thm]{Corollary}
\newtheorem{lem}[thm]{Lemma}
\newtheorem{prop}[thm]{Proposition}
\newtheorem{theorem}[thm]{Theorem}
\theoremstyle{remark}
\newtheorem{rem}[thm]{Remark}
\newcommand{\Trace}{\mathrm{Trace}}
\newcommand{\Sym}{\mathrm{Sym}}
\newcommand{\Hom}{\mathrm{Hom}}
\newcommand{\Swan}{\mathrm{Swan}}
\newcommand{\Fp}{\mathbb F_p}
\newcommand{\Fq}{\mathbb F_q}
\newcommand{\Fqm}{\mathbb F_{q^m}}
\newcommand{\CC}{\mathbb C}
\newcommand{\FFq}{{\overline{\mathbb F}_q}}
\newcommand{\QQ}{\overline{\mathbb Q}_\ell}
\newcommand{\R}{\mathrm R}
\newcommand{\Z}{\mathbb Z}
\newcommand{\Gal}{\mathrm {Gal}}
\newcommand{\AAA}{\mathbb A}
\newcommand{\PP}{\mathbb P}
\newcommand{\HH}{\mathrm H}
\newcommand{\LL}{{\mathcal L}}
\newcommand{\Aid}{\Aif}
\newcommand{\Aif}{\mathrm{Ai}_f}
\def\bb #1{ {\mathbb #1} }
\def\c #1{ {\mathcal #1} }
\title{$L$-functions of symmetric powers of the generalized Airy family of exponential sums}
\author{C. Douglas Haessig\footnote{Partially supported by NSF grant DMS-0901542} \and Antonio Rojas-Le\'on\thanks{Partially supported by MTM2007-66929 and FEDER} }
\date{\today}
\begin{document}
\maketitle

\begin{abstract}
This paper looks at the $L$-function of the $k$-th symmetric power of the $\overline{\bb Q}_\ell$-sheaf $\Aif $ over the affine line $\bb A^1_{\bb F_q}$ associated to the generalized Airy family of exponential sums. Using $\ell$-adic techniques, we compute the degree of this rational function and local factors at infinity.
\end{abstract}

\section{Introduction}

In this paper we study the $L$-function attached to the $k$-th symmetric power of the $\overline{\bb Q}_\ell$-sheaf $\Aif$ associated to the generalized Airy family of exponential sums. Symmetric powers appear in the proofs of many arithmetic problems. For instance, Deligne's proof \cite{Deligne} of the Ramanujan-Petersson conjecture relies on the construction of a Galois module coming from the $k$-th symmetric power of a certain $\ell$-adic sheaf. The Sato-Tate conjecture \cite{CHT} \cite{HST} \cite{T} relies on the analytic continuation of the $L$-function attached to the $k$-th symmetric power of an $\ell$-adic representation coming from an elliptic curve. Another equidistribution result concerning Kloosterman angles was proven by Adolphson \cite{Adolphson-distributionofangles-1989} using results of Robba's \cite{Robba-SymmetricPowersof-1986} on the $L$-function of the $k$-th symmetric power of the $\ell$-adic Kloosterman sheaf $\text{Kl}_2$. Symmetric powers also arise in the proof of Dwork's conjecture \cite{Wan-Dwork'sconjectureunit-1999} \cite{Wan-Higherrankcase-2000} \cite{Wan-Rankonecase-2000}. To begin, let us recall the general setup of an $L$-function of an $\ell$-adic representation.

Let $\bb F_q$ be the finite field of $q$ elements and characteristic $p$. Let $Y$ be a smooth, geometrically connected, open variety defined over $\bb F_q$; for instance, take $Y$ to be affine $s$-space $\bb A_{\bb F_q}^s$ or the torus $\bb G_m^s$. Denote its function field by $K$, and its corresponding absolute Galois group by $G_K := \mathrm{Gal}(K^{sep}/K)$. Let $V$ be a finite dimensional vector space over a finite extension field of $\bb Q_\ell$, where $\ell \not= p$. Let $\rho : G_K \rightarrow GL(V)$ be a continuous $\ell$-adic representation unramified on $Y$, and let $\c F$ be the corresponding lisse sheaf on $Y$. Define the $L$-function of $\rho$ on $Y$ by
\begin{equation}\label{E: GeneralLfunction}
L(Y, \rho, T) := \prod_{x \in |Y|} \frac{1}{\det(1 - \rho(\text{Frob}_x) T^{\text{deg}(x)})}.
\end{equation}
By the Lefschetz trace formula, this is a rational function whose zeros and poles may be described using \'etale cohomology with compact support:
\[
L(Y, \rho, T) = \prod_{i=0}^{2 \text{dim}(Y)} \det(1 - \text{Frob}_q T | H_c^i(Y \otimes \overline{\bb F}_q, \c F))^{(-1)^{i+1}}
\]
Given such a representation, we may construct new $L$-functions via operations such as tensor, symmetric, or exterior products. Natural questions about these new $L$-functions concern the determination of their degrees (Euler characteristic) and describing various properties about their zeros and poles. In this paper, we will focus on the symmetric powers of a particular family of exponential sums called the \emph{generalized Airy family}. Other families whose symmetric powers have been investigated are the Legendre family of elliptic curves \cite{Adolphson-$p$-adictheoryof-1976} \cite{Dwork-Heckepolynomials-1971} and the hyperKloosterman family \cite{FuWan-$L$-functionssymmetricproducts-2005} \cite{FuWan-TrivialfactorsL-functions-} \cite{Robba-SymmetricPowersof-1986}. We note that the former seems to have been motivated by Dwork's $p$-adic interest in the Ramanujan-Petersson conjecture.

The generalized Airy family is defined as follows. Let $f$ be a polynomial over $\bb F_q$ of degree $d$ with $p \nmid d$. Let $\psi$ be a nontrivial additive character on $\bb F_q$. For each $\bar t \in \overline{\bb F}_q$ define its degree by $\text{deg}(\bar t) := [\bb F_q(\bar t) : \bb F_q]$. It is well-known that the associated $L$-function of the sequence of exponential sums
\[
S_m(\bar t) := \sum_{x \in \bb F_{q^{m \> \text{deg}(\bar t)}}} \psi \circ \text{Tr}_{\bb F_{q^{m \> \text{deg}(\bar t)}} / \bb F_q}( f(x) + \bar t x) \quad \text{for } m = 1, 2, 3, \ldots
\]
is a polynomial of degree $d-1$:
\[
L( f, \bb A^1, \bar t; T) := \exp\left( \sum_{m=1}^\infty S_m(\bar t) \frac{T^m}{m} \right) = (1 - \pi_1(\bar t) T) \cdots (1-\pi_{d-1}(\bar t) T).
\]
As we will describe later, the relative cohomology of this family may be represented $\ell$-adically as a lisse sheaf of rank $d-1$ over $\bb A^1$ via Fourier transform. Let us denote this sheaf by $\Aif$. The $L$-function of the $k$-th symmetric power of $\Aif$ takes the form:
\[
M_k(f, T) := L( \bb A^1, \text{Sym}^k( \Aif ), T) := \prod_{t \in | \bb A^1 |} \prod_{a_1 + \cdots + a_{d-1} = k} (1 - \pi_1(t)^{a_1} \cdots \pi_{d-1}(t)^{a_{d-1}} T^{deg(t)})^{-1},
\]
where $|\bb A^1|$ denotes the set of closed points on $\bb A^1$. By the Lefschetz trace formula, $M_k(f, T)$ is a rational function. The $\ell$-adic sheaf $\Aif$ was extensively studied by N. Katz in \cite{Katz-monodromygroupsattached-1987}, where its monodromy group is determined and, as a consequence, an equidistribution result is obtained for the exponential sums in the family (\cite[Corollary 20]{Katz-monodromygroupsattached-1987}). From these results it follows that, for $p>2d-1$, $M_k(f, T)$ is in fact a polynomial. For $d=3$, a study of the monodromy group may be avoided using Adolphson's method \cite{Adolphson-distributionofangles-1989}.

Our first main result is the computation of the degree of $M_k(f,T)$ for $p > d$. The degree of the rational function $M_k(f, T)$ equals the $k$-th coefficient of a generating series which is explicitly given in Corollary \ref{degreeM}. Simplified formulas are given in section \ref{someexamples} for some particularly nice values of $f$ and $p$.

As an example of this theorem, consider the family generated by $f(x) = x^d$. Then the degree of $M_k(x^d, T)$ may be described as follows. Let $\zeta$ be a primitive $(d-1)$-th root of unity in $\overline{\bb F}_q$. Denote by $N_{d-1, k}$ the number of $(d-1)$-tuples $(a_0,a_1,\ldots,a_{d-2})$ of nonnegative integers such that $a_0+a_1+\cdots+a_{d-2}=k$ and $a_0 + a_1 \zeta + \cdots + a_{d-2} \zeta^{d-2} = 0$ in $\overline{\bb F}_q$.

\begin{theorem} With the notation defined above, we have
\[
\mathrm{deg} \> M_k(x^d,T) = \frac{1}{d-1} \left[ {{k+d-2} \choose {d-2}} - dN_{d-1,k} \right].
\]
\end{theorem}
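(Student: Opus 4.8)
The plan is to compute the degree as (minus) the Euler--Poincaré characteristic of $\Sym^k(\Aif)$ and then to evaluate the resulting Swan conductor at infinity explicitly, using the homogeneity of $f=x^d$. By the Lefschetz formula recalled in the introduction, $M_k(x^d,T)=\prod_{i}\det(1-\mathrm{Frob}_q T\mid H^i_c)^{(-1)^{i+1}}$, so $\deg M_k(x^d,T)=\sum_i(-1)^{i+1}\dim H^i_c(\bb A^1\otimes\FFq,\Sym^k\Aif)=-\chi_c(\bb A^1,\Sym^k\Aif)$. Since $\Aif$ is lisse on the affine line we have $H^0_c=0$, and for $k\ge 1$ the large geometric monodromy group of $\Aif$ determined by Katz \cite{Katz-monodromygroupsattached-1987} forces $(\Sym^k\Aif)_{\pi_1^{\mathrm{geom}}}=0$, hence $H^2_c=0$; thus the degree is simply $\dim H^1_c=-\chi_c$.

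First I would invoke the Grothendieck--Ogg--Shafarevich (Euler--Poincaré) formula for the lisse sheaf $\Sym^k\Aif$ on $\bb A^1$, whose only ramification is at $\infty$. This gives $-\chi_c=\Swan_\infty(\Sym^k\Aif)-\mathrm{rank}(\Sym^k\Aif)$, and since $\Aif$ has rank $d-1$, the rank of its $k$-th symmetric power is $\binom{k+d-2}{d-2}$, which produces the first term of the asserted formula. It then remains to show $\Swan_\infty(\Sym^k\Aif)=\frac{d}{d-1}\bigl[\binom{k+d-2}{d-2}-N_{d-1,k}\bigr]$.

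The heart of the argument is the local structure of $\Aif$ at $\infty$. For $f=x^d$ the stationary-phase / local Fourier transform analysis shows that, as a representation of the decomposition group at $\infty$, $\Aif$ breaks up into $d-1$ rank-one wild characters whose leading phases are the critical values of $x^d+tx$ at the $d-1$ critical points $x=\zeta^j(-t/d)^{1/(d-1)}$; by homogeneity these values are exactly $c\,\zeta^j\,t^{d/(d-1)}$, a single monomial, with $\zeta$ a primitive $(d-1)$-th root of unity and $c\ne 0$ independent of $j$. Taking the $k$-th symmetric power decomposes the local representation into the $\binom{k+d-2}{d-2}$ characters indexed by $(a_0,\dots,a_{d-2})$ with $\sum a_j=k$, the $(a_j)$-summand having leading phase $c\,(\sum_j a_j\zeta^j)\,t^{d/(d-1)}$. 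Because this phase is a pure monomial with no lower-order corrections, the summand has break exactly $\frac{d}{d-1}$ when $\sum_j a_j\zeta^j\ne 0$ and is trivial (break $0$, contributing nothing to the Swan conductor) when $\sum_j a_j\zeta^j=0$. The number of tuples in the latter case is precisely $N_{d-1,k}$, so exactly $\binom{k+d-2}{d-2}-N_{d-1,k}$ summands contribute $\frac{d}{d-1}$ each, giving the claimed Swan conductor. Substituting into $\deg M_k=\Swan_\infty-\mathrm{rank}$ and simplifying $\frac{d}{d-1}\bigl[\binom{k+d-2}{d-2}-N_{d-1,k}\bigr]-\binom{k+d-2}{d-2}=\frac{1}{d-1}\bigl[\binom{k+d-2}{d-2}-dN_{d-1,k}\bigr]$ yields the theorem.

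The main obstacle I anticipate is justifying the clean local description at $\infty$: precisely that for $f=x^d$ the wild characters are pure Artin--Schreier characters of the single monomial $c\,\zeta^j t^{d/(d-1)}$ with no lower-order terms, so that the break genuinely collapses all the way to $0$ (rather than to some smaller positive slope) exactly when $\sum_j a_j\zeta^j=0$. This is the step where the homogeneity of $x^d$ is essential, and where the general Corollary \ref{degreeM} does the corresponding bookkeeping for arbitrary $f$; here it specializes to the root-of-unity count $N_{d-1,k}$. A secondary point to verify carefully is the vanishing of $H^2_c$, i.e.\ that $\Sym^k$ of the standard representation of Katz's monodromy group has no invariants for $k\ge 1$, which must be checked against the explicit group according to the parity of $d$ and the characteristic.
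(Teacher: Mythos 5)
Your proposal is correct and follows essentially the same route as the paper: the Grothendieck--Ogg--Shafarevich formula reduces the degree to $\Swan_\infty(\Sym^k\Aif)-\binom{k+d-2}{d-2}$, and the stationary-phase/local Fourier transform description of $\Aif$ at infinity (after pulling back along the $(d-1)$-power map) splits $\Sym^k\Aif$ into rank-one summands with phase $c\,(\sum_j a_j\zeta^j)\,t^{d/(d-1)}$, so that exactly the $N_{d-1,k}$ tuples with $\sum_j a_j\zeta^j=0$ drop out of the Swan conductor --- this is precisely the paper's general computation specialized to $g(t)=\frac{d-1}{dr_0}t^d$. One minor caveat: your aside that $H^2_c$ vanishes is both unnecessary (the degree of the rational function is $-\chi_c=\dim H^1_c-\dim H^2_c$ in any case) and not always valid under the theorem's hypothesis $p>d$, since Katz's big-monodromy theorem requires $p>2d-1$ and, e.g., for $p=5$, $d=3$ the monodromy is finite; this does not affect your argument.
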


It was conjectured in \cite{Haessig-$L$-functionsofsymmetric-} that $M_k(x^3, T)$ is a polynomial for all $p > 3$ since it was shown, in that paper, that $M_k(x^3, T)$ is a polynomial for every odd integer $k$, and also for every $k$ even with $k < 2p$. Surprisingly, for $p = 5$, $M_k(x^3, T)$ is \emph{not} a polynomial for infinitely many $k$. This was communicated to the first author by N. Katz and is a consequence of the geometric monodromy group of $\text{Ai}_{x^3}$ being finite.

\begin{theorem}\label{T: FunEqu}
Suppose $p > 2d-1$. Then $M_k(f, T)$ is a polynomial which may be factored into a product $Q_k(f,T) P_k(f,T)$, where $P_k(f,T)$ satisfies the functional equation
$$
{P_k(f,T)} = c T^{\mathrm{deg}(P_k)} \overline{P_k(f, 1/q^{k+1}T)} \qquad \text{with } |c| = q^{\mathrm{deg}(P_k)(k+1)/2}
$$
and $Q_k(f, T)$ has reciprocal roots of weight $\leq k$. Furthermore, writing $f(x) = \sum_{i=0}^d c_i x^i$, if we assume $\bb F_q$ contains the $2(d-1)$-th roots of $-d c_d$ then an explicit description of $Q_k(f, T)$ may be given; see Corollary \ref{localLfunction}.
\end{theorem}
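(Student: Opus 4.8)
The plan is to establish the functional equation by identifying $M_k(f,T)$ with an alternating product of characteristic polynomials of Frobenius acting on compactly supported cohomology $H^i_c(\mathbb A^1 \otimes \overline{\mathbb F}_q, \mathrm{Sym}^k(\mathrm{Ai}_f))$, and then to extract from this cohomology the "middle" or "interesting" part $P_k$ that carries the Poincaré-duality pairing, relegating the "trivial" boundary contributions to $Q_k$. Since $\mathrm{Ai}_f$ is lisse of rank $d-1$ on $\mathbb A^1$ and (by Katz's monodromy computation) has geometric monodromy group $\mathrm{Sp}_{d-1}$ or $\mathrm{SL}_{d-1}$ depending on parity of $d$, the representation $\mathrm{Sym}^k(\mathrm{Ai}_f)$ is geometrically irreducible and nontrivial when $p > 2d-1$, which forces $H^0_c = 0$ and makes $H^2_c$ vanish or reduce to a controlled coinvariant space. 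Consequently $M_k(f,T)$ is a polynomial (matching the assertion) equal to $\det\!\big(1 - \mathrm{Frob}_q\,T \mid H^1_c\big)$ up to the $H^2_c$-factor.

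**The heart of the argument** is the duality. First I would invoke Poincaré duality, which pairs $H^1_c(\mathbb A^1, \mathcal G)$ with $H^1(\mathbb A^1, \mathcal G^\vee)(1)$ for $\mathcal G = \mathrm{Sym}^k(\mathrm{Ai}_f)$. Because $\mathrm{Ai}_f$ is self-dual up to a Tate twist — a consequence of its symplectic/orthogonal autoduality coming from the monodromy group, with $\mathrm{Ai}_f^\vee \cong \mathrm{Ai}_f(w)$ for the appropriate weight $w$ — the symmetric power $\mathrm{Sym}^k(\mathrm{Ai}_f)$ inherits an autoduality pairing $\mathrm{Sym}^k(\mathrm{Ai}_f)^\vee \cong \mathrm{Sym}^k(\mathrm{Ai}_f)(k w')$. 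Feeding this into Poincaré duality produces a perfect pairing on $H^1_c$ with itself, twisted by $q^{k+1}$ (the $+1$ being the cohomological Tate twist and $k$ coming from the weight of the symmetric power), which is precisely the functional equation $P_k(f,T) = c\,T^{\deg P_k}\,\overline{P_k(f,1/q^{k+1}T)}$. The constant $c$ and the normalization $|c| = q^{\deg(P_k)(k+1)/2}$ then follow from Deligne's purity: the reciprocal roots of $P_k$ (the part coming from $H^1_c$ after removing the low-weight coinvariant piece) are pure of weight $k+1$, so their product has the stated absolute value.

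**I would next isolate $Q_k$** as the piece not governed by the pure duality. The factors of weight $\le k$ arise from the local behavior at $\infty$, where $\mathrm{Ai}_f$ is wildly ramified, and from the invariants/coinvariants under the monodromy action, i.e. from $H^2_c$ (the geometric coinvariants $\mathrm{Sym}^k(\mathrm{Ai}_f)_{\pi_1}$, which can be nonzero when the symmetric power contains a trivial subrepresentation) together with any tame contribution. By the monodromy being $\mathrm{Sp}_{d-1}$ (respectively $\mathrm{SL}_{d-1}$), the multiplicity of the trivial representation in $\mathrm{Sym}^k$ is computable via invariant theory, and these invariants contribute Frobenius eigenvalues of weight $\le k$ rather than $k+1$ — hence they must be segregated into $Q_k$ for the functional equation on $P_k$ to hold. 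The explicit description of $Q_k$ claimed in the theorem then reduces to computing these low-weight local factors, which is where the hypothesis that $\mathbb F_q$ contains the $2(d-1)$-th roots of $-d c_d$ enters: it guarantees that the relevant Frobenius eigenvalues at $\infty$ lie in $\mathbb F_q$ so their contribution can be written down rationally, as deferred to Corollary \ref{localLfunction}.

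**The main obstacle** I anticipate is making the autoduality of $\mathrm{Ai}_f$ precise enough to track the exact Tate twist and the sign/parity of the pairing (symplectic versus orthogonal), since this determines both the weight $k+1$ normalization and the structure of $Q_k$; in particular one must carefully separate the genuinely self-dual middle cohomology from the boundary terms at $\infty$, and verify that for $p > 2d-1$ the wild part of the local monodromy at infinity does not contaminate the weight-$(k+1)$ part of $H^1_c$. The cleanest route is to compute the local monodromy representation of $\mathrm{Ai}_f$ at $\infty$ explicitly (its breaks, its Swan conductor, and its decomposition into characters after restriction to a suitable ramification subgroup), then apply the Euler–Poincaré formula together with the local duality to read off exactly which Frobenius eigenvalues land in $P_k$ and which in $Q_k$.
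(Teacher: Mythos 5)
Your overall architecture (cohomological interpretation, vanishing of $\HH^2_c$ for $p>2d-1$, a duality on a ``middle'' part, and a low-weight factor coming from the behaviour at infinity) matches the paper's, but two of your key steps do not survive scrutiny. The more serious one is the autoduality claim: you derive the functional equation from $\Aif^\vee\cong\Aif(w)$, ``a consequence of its symplectic/orthogonal autoduality coming from the monodromy group.'' For $d$ even the geometric monodromy group is $\mathrm{SL}(d-1)$ (possibly times $\mu_p$) with $d-1\geq 3$, and the standard representation of $\mathrm{SL}(n)$ is \emph{not} self-dual for $n\geq 3$; so $\Aif$ is not autodual in that case and the pairing you need does not exist. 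The paper instead pairs $\HH^1(\PP^1,j_\star\Sym^k\Aif)$ with $\HH^1(\PP^1,j_\star\Sym^k\widehat{\Aif})$, where $\widehat{\Aif}$ is the genuine dual sheaf, constructed in the same way as $\Aif$ but with the conjugate character $\bar\psi$; this is exactly why the functional equation in the statement relates $P_k(f,T)$ to the complex conjugate $\overline{P_k(f,1/q^{k+1}T)}$ rather than to $P_k$ itself. An argument via autoduality could at best work for $d$ odd, and even then would produce a self-referential functional equation, not the conjugated one asserted. (Also, the paper gets $|c|=q^{\deg(P_k)(k+1)/2}$ simply by applying the functional equation twice; no appeal to purity of $\HH^1(\PP^1,j_\star)$ is needed.)

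Second, your identification of $Q_k$ is off. Under the hypothesis $p>2d-1$ the coinvariants $\HH^2_c(\AAA^1_{\FFq},\Sym^k\Aif)$ vanish --- that is precisely why $M_k$ is a polynomial --- so they contribute nothing to $Q_k$, and there are no global monodromy invariants to ``segregate'' either. The factorization $M_k=Q_kP_k$ comes from a specific mechanism you never supply: the short exact sequence $0\to j_!\Sym^k\Aif\to j_\star\Sym^k\Aif\to(j_\star\Sym^k\Aif)_\infty\to 0$ on $\PP^1$, whose cohomology sequence yields $0\to(\Sym^k\Aif)^{I_\infty}\to\HH^1_c(\AAA^1,\Sym^k\Aif)\to\HH^1(\PP^1,j_\star\Sym^k\Aif)\to 0$. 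Thus $Q_k$ is exactly the characteristic polynomial of Frobenius on the $I_\infty$-invariants (computed explicitly in Corollary \ref{localLfunction}, with single eigenvalue $\psi(kb_0)\rho(d(d-1)c_d/2)^kg(\psi,\rho)^k$ of weight $k$), and $P_k$ is the factor on $\HH^1(\PP^1,j_\star\Sym^k\Aif)$, to which Deligne's duality for middle extensions applies. Your closing paragraph gestures at the local analysis at infinity, which is the right instinct, but without this exact sequence and without the correct dual sheaf the proof does not close.
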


Describing the $p$-adic behavior of the reciprocal roots of $M_k(f, T)$ is also of interest. Motivation for such a study comes from Wan's reciprocity theorem \cite{Wan-Dimensionvariationof-1998} of the Gouv\^ea-Mazur conjecture \cite{GouveaMazur} on the slopes of modular forms; see \cite{Adolphson-$p$-adictheoryof-1976} for the connection between symmetric powers of the Legendre crystal with Hecke polynomials. At present we are able to prove the following improvement to \cite{Haessig-$L$-functionsofsymmetric-}. Assume $p \geq 7$, $k$ is odd and $k < p$. Write $M_k(x^3, T) = 1 + c_1 T + \cdots + c_{(k+1)/2} T^{(k+1)/2}$. Then the $q$-adic Newton polygon lies on or above the quadratic function $\frac{1}{3}( m^2 + m + k m )$ for $m = 0, 1, 2, \ldots, \frac{k+1}{2}$. Furthermore, as a consequence of the functional equation, the endpoints of the $q$-adic Newton polygon of $M_k(x^3, T)$ coincide with this lower bound. We will prove this in a separate paper.

\bigskip\noindent{\bf Acknowledgments.} We would like to thank Nicholas Katz and Steven Sperber for their very helpful comments.

\section{Cohomological interpretation of $M_k(f, T)$}

In this section we will study the generalized Airy family of exponential sums from the point of view of $\ell$-adic cohomology. We will do so by studying the sheaf $\mathrm{Ai}_f$ that represents this family on the affine line $\AAA^1$ over the given finite field $\Fq$. We begin by observing that the map $\Fq\to \CC$ given by $t\mapsto \sum_{x\in \Fq}\psi(f(x)+tx)$ is the Fourier transform with respect to $\psi$, in the classical sense, of the map $t\mapsto \psi(f(t))$. This will translate, in the cohomological sense, to the fact that $\mathrm{Ai}_f$ is the Fourier transform, in the sheaf-theoretical sense, of the $\QQ$-sheaf that represents the latter map, which is just the pull-back of the Artin-Schreier sheaf associated to $\psi$ via the map given by $f$. Let us be more precise.

The polynomial $f$ naturally defines a morphism, also denoted by $f:\AAA^1_{\Fq}\to\AAA^1_{\Fq}$. Let ${\mathcal L}_\psi$ be the Artin-Schreier sheaf on $\AAA^1_{\Fq}$ associated to $\psi$ (cf. \cite[1.7]{deligne-traces}). For every finite extension $\Fqm$ of $\Fq$, every $t \in \AAA^1(\Fqm)=\Fqm$ and every geometric point $\bar t$ over $t$, we have $\Trace(\mathrm{Frob}_t|{\mathcal L}_{\psi,\bar t}) = \psi(\Trace_{\Fqm / \Fq}(t))$, where $\mathrm{Frob}_t$ denotes a geometric Frobenius element at $t$. Consider the pullback ${\mathcal L}_{\psi(f)}:=f^\star{\mathcal L}_\psi$.

By \cite[Theorem 17]{Katz-monodromygroupsattached-1987}, for $d\geq 2$ the Fourier transform with respect to $\psi$ of ${\mathcal L}_{\psi(f)}$ (which, in principle, is an element of the derived category ${\mathcal D}^b_c(\AAA^1,\QQ)$) is in fact a (shifted) lisse sheaf on $\AAA^1$, of rank $d-1$ and with $d/(d-1)$ as its single slope at infinity. Its Swan conductor is therefore $d$. Let us denote this sheaf by $\mathrm{Ai}_f=\R^1\pi_{t!}\LL_{\psi(f(x)+tx)}$, where $\pi_t:\AAA^2\to\AAA^1$ is the projection $(x,t)\mapsto t$. For every finite extension $\Fqm$ of $\Fq$, every $t\in \Fqm$ and every geometric point $\bar t$ over $t$ we have, denoting $\psi_m=\psi\circ\Trace_{\Fqm/\Fq}$:
$$
\Trace(\text{Frob}_t|({\mathrm{Ai}_f})_{\bar t}) = -\sum_{x\in \Fqm}\psi_m(f(x)+tx).
$$
The characteristic polynomial of the action of a geometric Frobenius element $\text{Frob}_t$ at $t$ on the stalk of $\mathrm{Ai}_f$ at a geometric point over $t$ has the form
$$
L(\Aif , t, T)=(1-\pi_1(t)T)\cdots(1-\pi_{d-1}(t)T)
$$
where $\pi_i(t)$ is a Weil algebraic number of weight $1$ (i.e. all its complex conjugates have absolute value $q^{1/2}$) and $\sum_{x\in \Fqm}\psi_m(f(x)+tx)=-\sum_i \pi_i(t)^m$ for all $m\geq 1$. Its $k$-th ``symmetric power'' is given by
$$
L(k; \Aif , t, T) := \prod_{a_1+\cdots+a_{d-1}=k} (1-\pi_1(t)^{a_1} \cdots \pi_{d-1}(t)^{a_{d-1}}T).
$$
These are the local factors of the $L$-function of the $k$-th symmetric power of $\Aif$, which is given by the infinite product
$$
M_k(f,T):=\prod_{t\in|\AAA^1|}L(k; \Aif , t, T^{\deg(t)})^{-1}
$$
The Lefschetz trace formula demonstrates that the zeros and poles of $M_k(f,T)$ may be described in terms of cohomology:
$$
M_k(f,T)=\prod_{i=0}^2 \det(1 - \text{Frob} \> T| \HH^i_c(\AAA^1_{\FFq},\Sym^k\Aif))^{(-1)^{i+1}}.
$$
Since $\Sym^k\Aif$ is a lisse sheaf on the affine line, we have $\HH^0_c(\AAA^1_{\FFq},\Sym^k\Aif)=0$, and the previous formula simplifies to
$$
M_k(f,T) = \frac{\det(1 - \text{Frob} \> T | \HH^1_c(\AAA^1_{\FFq},\Sym^k\Aif))}{\det(1 - \text{Frob} \> T| \HH^2_c(\AAA^1_{\FFq},\Sym^k\Aif))}.
$$

On the other hand, $\HH^2_c(\AAA^1_{\FFq},\Sym^k\Aif)$ is just the space of co-invariants of the sheaf $\Sym^k{\Aif}$, regarded as a representation of the fundamental group $\pi_1(\AAA^1_{\FFq})$, which is the $k$-th symmetric power of $\Aif$ regarded as a representation of the same group. This is the same as the space of co-invariants for its monodromy group, which is defined to be the Zariski closure of its image in the group of automorphisms of the generic stalk of $\Aif$, isomorphic to $\mathrm{GL}(d-1):=\mathrm{GL}(d-1,\QQ)$. By \cite[Theorem 19]{Katz-monodromygroupsattached-1987}, for $p > 2d-1$ the geometric monodromy group of $\Aif$ is either $\text{SL}(d-1)$ for $d$ even, or $\text{Sp}(d-1)$ for $d$ odd if $c_{d-1}=0$ and $\mu_p\cdot\text{SL}(d-1)$ for $d$ even or $\mu_p\cdot\text{Sp}(d-1)$ for $d$ odd if $c_{d-1}\neq 0$ (where $f(x)=\sum_{i=0}^d c_i x^i$). In either case, its $k$-th symmetric power is still an irreducible representation of rank ${d+k-2}\choose{d-2}$ of the monodromy group (because it is an irreducible representation of its subgroup $\text{SL}(d-1)$ or $\text{Sp}(d-1)$), and in particular the space of co-invariants vanishes. More generally, it was proven by O. \v{S}uch (\cite[Proposition 1.6]{such2000monodromy}) that, for $p>2$, either $\Aif$ has finite monodromy or its monodromy group contains $\text{SL}(d-1)$ or $\text{Sp}(d-1)$. In order to rule out the finite monodromy case for $p\leq 2d-1$ one may use for instance \cite[Proposition 8.14.3]{katz-esde}, which implies that $\Aif$ has finite monodromy if and only if for every element $t\in\overline{{\mathbb F}_{q}}$ the Newton polygon of the $L$-function associated to the exponential sum $\sum\psi(f(x)+tx)$ has a single slope.

Consequently, we have the following:

\begin{thm} If $\Aif$ does not have finite monodromy (e.g. if $p>2d-1$), the $L$-function of the $k$-th symmetric power of $\Aif$ is a polynomial:
$$
M_k(f,T) = \det(1 - \mathrm{Frob} \> T | \HH^1_c(\AAA^1_{\FFq},\Sym^k{\Aif}))
$$
\end{thm}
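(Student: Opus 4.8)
The plan is to reduce everything to the single vanishing statement $\HH^2_c(\AAA^1_{\FFq},\Sym^k\Aif)=0$. Indeed, the preceding discussion already records the Lefschetz expression $M_k(f,T)=\det(1-\mathrm{Frob}\,T\mid\HH^1_c)/\det(1-\mathrm{Frob}\,T\mid\HH^2_c)$ together with $\HH^0_c(\AAA^1_{\FFq},\Sym^k\Aif)=0$, so once the $\HH^2_c$ term is shown to vanish the denominator is identically $1$ and $M_k(f,T)$ coincides with the $\HH^1_c$ determinant, which is manifestly a polynomial. Thus the whole proof is an assembly of facts already quoted, and the only genuine content is the vanishing of the top cohomology.

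First I would invoke the standard description of top compactly supported cohomology on a smooth connected curve: $\HH^2_c(\AAA^1_{\FFq},\Sym^k\Aif)$ is, up to a Tate twist that does not affect dimension, the space of $\pi_1(\AAA^1_{\FFq})$-coinvariants of $\Sym^k\Aif$, where $\Sym^k\Aif$ is the $k$-th symmetric power of the standard $(d-1)$-dimensional representation $V$ attached to $\Aif$. Since passing to coinvariants factors through the image of $\pi_1$, and hence through its Zariski closure $G$ (the geometric monodromy group), it suffices to show that $\Sym^k V$ admits no nonzero $G$-coinvariants, equivalently that the trivial representation is not a quotient of $\Sym^k V$ as a $G$-module.

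The arithmetic heart is the hypothesis that $\Aif$ does not have finite monodromy. Here I would cite \v{S}uch's result (\cite[Proposition 1.6]{such2000monodromy}): for $p>2$, non-finite monodromy forces $G$ to contain $\mathrm{SL}(d-1)$ (when $d$ is even) or $\mathrm{Sp}(d-1)$ (when $d$ is odd). I would then appeal to classical highest-weight theory over $\QQ$: the symmetric power $\Sym^k V$ of the standard representation is irreducible for both $\mathrm{SL}(d-1)$ and $\mathrm{Sp}(d-1)$ — in the symplectic case because the invariant bilinear form is alternating and so contributes to $\wedge^2 V$ rather than to $\Sym^2 V$ — and for $k\geq 1$ it is nontrivial. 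A nontrivial irreducible representation has no trivial quotient, so its coinvariants under $\mathrm{SL}(d-1)$ or $\mathrm{Sp}(d-1)$, and a fortiori under the larger group $G$, vanish. This yields $\HH^2_c=0$ and completes the argument.

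I do not expect a serious obstacle in the representation-theoretic step, which is routine once the monodromy group is known to be large. The only delicate input is ruling out finite monodromy, and this is exactly what the hypothesis supplies: in the explicit range $p>2d-1$ it is immediate from Katz's determination of the monodromy group (\cite[Theorem 19]{Katz-monodromygroupsattached-1987}), while for smaller $p$ the non-finiteness is precisely the condition one checks via the Newton-polygon criterion \cite[Proposition 8.14.3]{katz-esde}. I would therefore present the proof as the short synthesis: $\HH^0_c=0$, the coinvariants computation gives $\HH^2_c=0$, and the Lefschetz formula then identifies $M_k(f,T)$ with $\det(1-\mathrm{Frob}\,T\mid\HH^1_c)$.
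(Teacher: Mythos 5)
Your argument is correct and follows essentially the same route as the paper: both reduce to the vanishing of $\HH^2_c$ as the space of coinvariants, invoke Katz's Theorem 19 (for $p>2d-1$) and \v{S}uch's result (in general) to show the geometric monodromy group contains $\mathrm{SL}(d-1)$ or $\mathrm{Sp}(d-1)$, and conclude via the irreducibility of $\Sym^k$ of the standard representation of these groups. No substantive difference.
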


While it is tempting to believe that $M_k(f,T)$ is always a polynomial, this is not true, as mentioned in the introduction. In fact, the monodromy group can be finite in certain cases; for instance when $p=5$ and $f(x)=x^3$, as proven in \cite{Katz-$G_2$andHypergeometric-2007}. In such cases, $\HH^2_c(\AAA^1_{\FFq},\Sym^k{\Aif})$ will be non-trivial for infinitely many values of $k$, and consequently $M_k(f,T)$ will have a denominator.

\begin{rem}
Arithmetic difficulties often arise when the characteristic $p$ is small compared to $d$, as demonstrated above by the link between the finiteness of the monodromy group when $p \leq 2d-1$ and the Newton polygons of the fibres of the family.  By the functional equation, if we denote by $\text{NP}_1(t)$ the slope of the first line segment of the Newton polygon of the fibre $t$ then $\text{NP}_1(t) \leq 1/2$ with equality  if and only if the Newton polygon is a single line segment. 
If $p \equiv 1$ modulo $d$, and in particular when $p = d+1$, then by \cite[Theorem 3.11]{sperber1986p} the Newton polygon of every fibre equals the $q$-adic Newton polygon of the polynomial $\prod_{i=1}^{d-1} ( 1 - q^{i/d} T)$. Thus, $\text{NP}_1(t) = 1/d$ and so the monodromy group is infinite when $p = d+1 > 3$.  

Let $[f(x)]_{x^N}$ denote the coefficient of $x^N$ in $f(x)$. Suppose $\frac{d}{2} + 1 < p \leq 2d-1$ and $f$ has coefficients over $\bb F_p$. By \cite[Theorem 2]{ScholtenZhu}, if $[(f(x)+tx)^{\lceil \frac{p-1}{d} \rceil}]_{x^{p-1}} \not\equiv 0$ modulo $p$ for some $0 \leq t \leq p-1$, then $\text{NP}_1(t) \leq \left\lceil \frac{p-1}{d} \right\rceil / (p-1)$ for those $t$. Hence, the mondromy group is infinite when such a $t$ exists and $d \geq 3$ and $p \geq 7$. Their argument can be extended to show the following. Let $( h(x) )_s := h(x) (h(x) - 1) \cdots (h(x) - s + 1)$. Define the linear operator $U: \bb F_p[x] \rightarrow \bb F_p$ by sending $x^n$ to 0 if $(p-1) \nmid n$ and 1 otherwise. Let $c_s := U( (f(x)+tx)_s)$. Suppose $c_1 \equiv \cdots \equiv c_{k-1} \equiv 0$ modulo $p$ and $c_k \not\equiv 0$ mod $p$ for some $t$, then $\text{NP}_1(t) \leq \frac{k}{p-1}$. Hence, if this happens for some $k <  (p-1)/2$ then the mondromy group is infinite. For example, for $d > p-1$ and $f(x) = x^d + x^{p-1}$ then $c_1 = 1$ and hence the monodromy group is infinite for $p \geq 5$.

Lastly,  we mention the case when $d=4$, $p=7$ and $f \in \bb F_q[x]$ is not of the form $(x+a)^4+bx+c$, then by \cite[Theorem 4.6]{hong2001newton} the monodromy of $\Aif$ is not finite.
\end{rem}

\section{Computation of the degree of the $L$-function}

We will now study the degree of $M_k(f,T)$ when $p > d$. From the formula above we have \[
\mathrm{deg}(M_k(f, T)) = \dim(\HH^1_c(\AAA^1_{\FFq},\Sym^k{\Aif}))-\dim(\HH^2_c(\AAA^1_{\FFq},\Sym^k{\Aif})) = -\chi_c(\AAA^1_{\FFq},\Sym^k{\Aif}),
\]
where $\chi_c$ denotes the Euler characteristic with compact supports. Using the Grothendieck-N\'eron-Ogg-Shafarevic formula, we have then
\begin{align}\label{degree}
\mathrm{deg}(M_k(f, T)) &= \mathrm{Swan}_\infty(\Sym^k{\Aif}) - \mathrm{rank}(\Sym^k{\Aif}) \notag \\
&= \mathrm{Swan}_\infty(\Sym^k{\Aif}) - {{k+d-2}\choose{d-2}}.
\end{align}
In order to compute the Swan conductor of $\Sym^k{\Aif}$ we have to study the sheaf $\Aif$ as a representation of the inertia group $I_\infty$ of $\AAA^1_{\FFq}$ at infinity. Since $\LL_{\psi(f)}$ is lisse on $\AAA^1$, as a representation of the decomposition group at infinity we have $\Aif\cong {\mathcal F}_{\infty,\infty}(\LL_{\psi(f)})$, where ${\mathcal F}_{\infty,\infty}$ is the local Fourier transform as defined in \cite{laumon87}.

Recently, Fu \cite{fu2008} and, independently, Abbes and Saito \cite{abbes-saito} have given an explicit description of the different local Fourier transforms for a wide class of $\ell$-adic sheaves. We will mainly be using the description given in \cite{abbes-saito}, which works over an arbitrary (not necessarily algebraically closed) perfect base field, and therefore gives an explicit formula for $\Aif$ as a representation of the decomposition group $D_\infty$.

If $S_{(\infty)}$ is the henselization of the local ring of $\PP^1_{\Fq}$ at infinity with uniformizer $1/t$, the triple $(\LL_{\psi(f(t))},t,-f'(t))$ is a Legendre triple in the sense of \cite[Definition 2.16]{abbes-saito}. Therefore by \cite[Theorem 3.9]{abbes-saito} we conclude that, as a representation of $D_\infty$, $\Aif$ is isomorphic to
$$
(-f')_\star(\LL_{\psi(f(t))}\otimes\LL_{\psi(-tf'(t))}\otimes\LL_{\rho(\frac{1}{2}f''(t))}\otimes{\mathcal Q})=(-f')_\star(\LL_{\psi(f(t)-tf'(t))}\otimes\LL_{\rho(\frac{1}{2}f''(t))}\otimes{\mathcal Q})
$$
where $\rho$ is the unique character $I_\infty\to\QQ^\star$ of order $2$, $\LL_{\rho}$ the corresponding Kummer sheaf and ${\mathcal Q}$ is the pull-back of the character $\mathrm{Gal}(\FFq/\Fq)\to\QQ$ mapping the geometric Frobenius to the quadratic Gauss sum $g(\psi,\rho):=-\sum_{t\in \Fq^\star}\psi(t)\rho(t)$.

Write $f(t)=\sum_{i=0}^d c_it^i$. For simplicity, from now on we will assume that $\Fq$ contains the $2(d-1)$-th roots of $-dc_d$ (which can always be achieved by a finite extension of the base field). Following \cite[Proposition 3.1]{fu2008} we can find an invertible power series $\sum_{i\geq 0} r_i t^{-i}\in\Fq[[t^{-1}]]$ with $r_0^{d-1}=-dc_d$ such that $u(t):=t \sum_{i\geq 0} r_i t^{-i}$ is a solution to $f'(t)+u(t)^{d-1}=0$ (the other solutions being $\zeta u(t)$ for every $(d-1)$-th root of unity $\zeta$). The map $\phi:1/t\mapsto 1/u(t)$ defines an automorphism $S_{(\infty)}\to S_{(\infty)}$, and by construction $-f'=[d-1]\circ \phi$, where $[d-1]$ is the $(d-1)$-th power map. So $\Aif$ is isomorphic to
\begin{align*}
[d-1]_\star\phi_\star(\LL_{\psi(f(t)-tf'(t))}\otimes\LL_{\rho(\frac{1}{2}f''(t))}\otimes{\mathcal Q}) &= [d-1]_\star(\phi^{-1})^\star(\LL_{\psi(f(t)-tf'(t))}\otimes\LL_{\rho(\frac{1}{2}f''(t))}\otimes{\mathcal Q}) \\
&=[d-1]_\star(\LL_{\psi(f(v(t))+v(t)t^{d-1})}\otimes\LL_{\rho(\frac{1}{2}f''(v(t)))}\otimes{\mathcal Q}) \\
&= [d-1]_\star(\LL_{\psi(f(v(t))+v(t)t^{d-1})}\otimes\LL_{\rho(\frac{1}{2}f''(v(t)))})\otimes{\mathcal Q}
\end{align*}
since $[d-1]^\star{\mathcal Q}={\mathcal Q}$, where $v(t):=\phi^{-1}(t)=t\sum_{i\geq 0} s_i t^{-i}$.

Let $g(t)$ be the polynomial of degree $d$ obtained from $f(v(t))+v(t)t^{d-1}$ by removing the terms with negative powers of $t$. It is important to notice that the coefficients of $g$ are polynomials in the coefficients of $f$. More precisely, if we write $g(t)=\sum b_i t^i$, the coefficient $b_i$ is a polynomial in the coeficients $a_i,a_{i+1},\ldots,a_d$ of $f$. Since $\LL_{\psi(h(t))}$ is trivial as a representation of $D_\infty$ for any $h(t)\in t^{-1}\Fq[[t^{-1}]]$, we have an isomorphism $\LL_{\psi(f(v(t))+v(t)t^{d-1})}\cong\LL_{\psi(g(t))}$ as representations of $D_\infty$.

On the other hand, from $f'(v(t))+t^{d-1}=0$ we get $f''(v(t))v'(t)+(d-1)t^{d-2}=0$, so $\LL_{\rho(\frac{1}{2}f''(v(t)))}=\LL_{\rho(-\frac{d-1}{2}v'(t)t^{d-2})}$ . Since $v'(t)=\sum_{i\geq 0}(1-i)s_it^{-i}=s_0(1+\sum_{i\geq 2}(1-i)\frac{s_i}{s_0}t^{-i})$ and $1+\sum_{i\geq 2}(1-i)\frac{s_i}{s_0}t^{-i}$ is a square in $\Fq[[t^{-1}]]$, we have $\LL_{\rho(-\frac{d-1}{2}v'(t)t^{d-2})}=\LL_{\rho(-\frac{d-1}{2}s_0t^{d-2})}=\LL_{\rho(\frac{d(d-1)}{2}c_d(s_0t)^{d-2})}$ (since $s_0^{d-1}=-1/dc_d$). So we finally get
\begin{equation}\label{MonodromyAtInf}
\Aif\cong[d-1]_\star(\LL_{\psi(g(t))}\otimes\LL_{\rho^d(s_0t)})\otimes\LL_{\rho(d(d-1)c_d/2)}\otimes{\mathcal Q}.
\end{equation}

We can now easily compute the Swan conductor at infinity of its symmetric powers. By \cite[1.13.1]{Katz-GaussSumsKloosterman-1988}, $$
\Swan_\infty\Sym^k\Aif=\frac{1}{d-1}\Swan_\infty[d-1]^\star\Sym^k\Aif=\frac{1}{d-1}\Swan_\infty\Sym^k[d-1]^\star\Aif
$$

\begin{lem}\label{induced} Let $\zeta$ be a primitive $(d-1)$-th root of unity if $\Fq$, $I_\infty^{d-1}$ the unique closed subgroup of $I_\infty$ of index $d-1$. As a representation of $I_\infty^{d-1}$, the restriction $[d-1]^\star\Aif$ of $\Aif$ is isomorphic to the direct sum $$\bigoplus_{i=0}^{d-2}\LL_{\psi(g(\zeta^it))}\otimes\LL_{\rho^d(s_0\zeta^i t)}\cong \bigoplus_{i=0}^{d-2}\LL_{\psi(g(\zeta^it))}\otimes\LL_{\rho^d(t)}$$.\end{lem}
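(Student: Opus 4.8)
The plan is to understand how the $(d-1)$-th power map $[d-1]$ interacts with the pushforward $[d-1]_\star$ appearing in the isomorphism \eqref{MonodromyAtInf}. The key structural fact is that $[d-1]$ is a Galois covering of $S_{(\infty)}$ with group $\mu_{d-1}$ (the $(d-1)$-th roots of unity, which live in $\Fq$ by our standing assumption), and $I_\infty^{d-1}$ is precisely the stabilizer, i.e. the inertia subgroup of the covering. The general principle I would invoke is the projection/Mackey formula for restriction of an induced (pushed-forward) representation along the very map that defines the induction: for a finite Galois covering with group $G$, if one pushes forward a sheaf $\GG$ and then pulls back along the same covering, one recovers $\bigoplus_{\sigma\in G}\sigma^\star\GG$. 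Applied here with $G=\mu_{d-1}$, this gives $[d-1]^\star[d-1]_\star\GG\cong\bigoplus_{i=0}^{d-2}[\zeta^i]^\star\GG$, where $[\zeta^i]$ is multiplication by $\zeta^i$ on the coordinate $t$ of the source of $[d-1]$.

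First I would record \eqref{MonodromyAtInf} in the form $\Aif\cong[d-1]_\star(\GG)\otimes\KK$, where $\GG:=\LL_{\psi(g(t))}\otimes\LL_{\rho^d(s_0t)}$ and $\KK:=\LL_{\rho(d(d-1)c_d/2)}\otimes\mathcal Q$ is a rank-one sheaf that is tame (in fact geometrically constant up to the Kummer factor). Pulling back along $[d-1]$ and using $[d-1]^\star\KK\cong\KK'$ restricted to $I_\infty^{d-1}$ is trivial as a geometric representation after the covering — more precisely, restricting to $I_\infty^{d-1}$ the tame twist disappears into the computation, matching the two displayed forms in the statement — I would then apply the projection formula to get
$$
[d-1]^\star\Aif\cong\bigoplus_{i=0}^{d-2}[\zeta^i]^\star\GG.
$$
Next I would compute each summand explicitly. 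Since $[\zeta^i]$ replaces $t$ by $\zeta^i t$ in the defining functions, we have $[\zeta^i]^\star\LL_{\psi(g(t))}\cong\LL_{\psi(g(\zeta^i t))}$ and $[\zeta^i]^\star\LL_{\rho^d(s_0 t)}\cong\LL_{\rho^d(s_0\zeta^i t)}$, giving the first displayed isomorphism. For the second isomorphism I would note that $\rho^d$ is a character of order dividing $2$, so $\LL_{\rho^d(s_0\zeta^i t)}\cong\LL_{\rho^d(t)}$ as representations of $I_\infty^{d-1}$: multiplying the argument of a Kummer sheaf by a constant $s_0\zeta^i\in\Fq^\star$ changes $\LL_{\rho^d}$ only by the geometrically trivial (tame, constant) factor $\rho^d(s_0\zeta^i)$, which may be absorbed.

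The main obstacle, and the step requiring the most care, is the precise bookkeeping of the base-field arithmetic versus the geometric restriction. The statement is about the restriction to the inertia subgroup $I_\infty^{d-1}$, so I must be careful to distinguish genuinely geometric isomorphisms from those that introduce constant (Frobenius-level) twists; the rank-one factor $\KK$ and the scalars $\rho^d(s_0\zeta^i)$ are exactly the bookkeeping that gets swept into constants when one restricts to inertia. I would therefore phrase the projection formula at the level of $D_\infty$ first, keeping track of $\mathcal Q$ and the Kummer twist, and only then restrict to $I_\infty^{d-1}$ to obtain the clean direct-sum decomposition, checking that the tame twists are trivial on $I_\infty^{d-1}$ precisely because $s_0\zeta^i$ is a unit constant. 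The verification that $I_\infty^{d-1}$ is the correct index-$(d-1)$ subgroup and that $\mu_{d-1}\subset\Fq$ acts as the Galois group of the covering $[d-1]$ is standard (this uses $p\nmid d-1$, guaranteed by $p>d$), but I would state it explicitly to justify applying the Mackey/projection formula.
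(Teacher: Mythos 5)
Your proof is correct, but it takes a genuinely different route from the paper's. You restrict the induced representation via the Mackey/base-change identity $[d-1]^\star[d-1]_\star\GG\cong\bigoplus_{\sigma\in\mu_{d-1}}\sigma^\star\GG$ for the Kummer covering $t\mapsto t^{d-1}$ (Galois with group $\mu_{d-1}\subset\Fq$, \'etale since $p\nmid d-1$), and then observe that the geometrically constant factor $\LL_{\rho(d(d-1)c_d/2)}\otimes{\mathcal Q}$ and the scalars $\rho^d(s_0\zeta^i)$ die on inertia. The paper instead argues summand by summand: it uses $[d-1]\circ\zeta^i=[d-1]$ and Frobenius reciprocity to show $\Hom_{I_\infty^{d-1}}([d-1]^\star\Aif,\LL_{\psi(g(\zeta^it))}\otimes\LL_{\rho^d(s_0\zeta^it)})=\Hom_{I_\infty}(\Aif,\Aif)\cong\QQ$ (invoking irreducibility of $\Aif$ as an $I_\infty$-representation), then checks that the $d-1$ candidate summands are pairwise non-isomorphic --- this is where the hypothesis $p>d$ enters, via the observation that $g(\zeta^it)-g(\zeta^jt)=h^p-h$ forces $g(\zeta^it)=g(\zeta^jt)$ and hence $\zeta^i=\zeta^j$ by comparing top coefficients --- and concludes by a rank count. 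Your route is cleaner in that the Mackey decomposition holds unconditionally, so you avoid both the irreducibility input and the pairwise-distinctness verification (you only need $p\nmid d-1$ for the covering to be tame); the paper's route has the side benefit of exhibiting each $\LL_{\psi(g(\zeta^it))}\otimes\LL_{\rho^d(s_0\zeta^it)}$ explicitly as a subrepresentation with one-dimensional Hom space, which foreshadows the explicit basis $\{v_0,\ldots,v_{d-2}\}$ used later in the trivial-factor computation. Your caution about separating geometric isomorphisms from Frobenius-level constants is exactly the right bookkeeping and matches what the lemma's second displayed isomorphism requires.
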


\begin{proof}
Since $(\zeta^i)^\star\LL_{\psi(g)}=\LL_{\psi(g(\zeta^it))}$, $(\zeta^i)^\star\LL_{\rho^d(s_0t)}=\LL_{\rho^d(s_0\zeta^it)}$ and $[d-1]\circ \zeta^i =[d-1]$ for every $i$, we have $[d-1]_\star(\LL_{\psi(g(\zeta^it))}\otimes\LL_{\rho^d(s_0\zeta^it)})=[d-1]_\star(\LL_{\psi(g(t))}\otimes\LL_{\rho^d(s_0t)})$, and therefore by Frobenius reciprocity $\Hom_{I_\infty^{d-1}}([d-1]^\star\Aif,\LL_{\psi(g(\zeta^it))}\otimes\LL_{\rho^d(s_0\zeta^it)})=\Hom_{I_\infty}(\Aif,[d-1]_\star(\LL_{\psi(g(\zeta^it))}\otimes\LL_{\rho^d(s_0\zeta^it)}))=\Hom_{I_\infty}(\Aif,\Aif)\cong \QQ$ since the latter is an irreducible representation of $I_\infty$. So for every $i$, $\LL_{\psi(g(\zeta^it))}\otimes\LL_{\rho^d(s_0\zeta^it)}$ is a subrepresentation of $[d-1]^\star\Aif$.

Now $\LL_{\psi(g(\zeta^it))}\otimes\LL_{\rho^d(s_0\zeta^it)}$ and $\LL_{\psi(g(\zeta^jt))}\otimes\LL_{\rho^d(s_0\zeta^jt)}$ are isomorphic if and only if $\LL_{\psi(g(\zeta^it))}$ and $\LL_{\psi(g(\zeta^jt))}$ are, if and only if $g(\zeta^it)-g(\zeta^jt)=h^p-h$ for some $h\in\FFq[t]$. Since $p>d$, this can only happen if $g(\zeta^it)=g(\zeta^jt)$. Comparing the highest degree coefficients we conclude that $\zeta^i$ and $\zeta^j$ must be equal. Therefore the direct sum of the $\LL_{\psi(g(\zeta^it))}\otimes\LL_{\rho^d(s_0\zeta^it)}$ for $i=0,\ldots,d-2$ injects into $[d-1]^\star\Aif$ and we conclude that it must be isomorphic to it, since they have the same rank.
\end{proof}

Consequently, we have an isomorphism of $\QQ[I_\infty]$-modules
$$
\Sym^k[d-1]^\star\Aif\cong\bigoplus_{a_0+a_1+\cdots+a_{d-2}=k}\LL_{\psi(\sum_{i=0}^{d-2} a_i g(\zeta^it))}\otimes\LL_{\rho^{dk}(t)}.
$$
For every finite subset $I\subset\Z$ and every integer $k\geq 0$ define
$$
S_{d-1}(k,I):=\{(a_0,\ldots,a_{d-2})\in\Z^{d-1}_{\geq 0} | a_0+a_1+\cdots+a_{d-2}=k, a_0+a_1\zeta^i+\cdots+a_{d-2}\zeta^{i(d-2)}=0 \text{ for every }i\in I\}
$$

It is clear from the definition that $S_{d-1}(k,I)=S_{d-1}(k,I')$ if $\phi(I)=\phi(I')$, where $\phi:\Z\to\Z/(d-1)\Z$ is reduction modulo $d-1$. Also, $S_{d-1}(k,I)=\emptyset$ if $p$ does not divide $k$ and $I\cap (d-1)\Z\neq\emptyset$. The number of elements in $S_{d-1}(k,I)$ can be conveniently expressed in terms of a generating function:

\begin{lem}
 Let $F_{d-1}(I;T):=\sum_{k=0}^{\infty} \# S_{d-1}(k,I)T^k$. Then
$$
F_{d-1}(I;T)=\frac{1}{q^{\#I}}\sum_{\gamma\in{(\Fq)^I}}\prod_{j=0}^{d-2}(1-\psi(\sum_{i\in I}\gamma_i\zeta^{ji})T)^{-1}
$$
where $\psi$ is any non-trivial additive character of $\Fq$.
\end{lem}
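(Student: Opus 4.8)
The plan is to detect the $\#I$ linear constraints defining $S_{d-1}(k,I)$ by additive-character orthogonality, turning the counting problem into a character sum that factors as a product of geometric series once we pass to the generating function.

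First, I would recall that for any $x\in\Fq$ one has $\frac{1}{q}\sum_{\gamma\in\Fq}\psi(\gamma x)=1$ if $x=0$ and $0$ otherwise. Applying this to each of the $\#I$ conditions $\sum_{j=0}^{d-2}a_j\zeta^{ij}=0$ (computed in $\Fq$, with the integers $a_j$ reduced mod $p$), I would write the indicator of the constraint set as a product over $i\in I$ of such averages. Expanding this product turns $\prod_{i\in I}\frac{1}{q}\sum_{\gamma_i\in\Fq}$ into a single average $\frac{1}{q^{\#I}}\sum_{\gamma\in(\Fq)^I}$, so that
$$
\#S_{d-1}(k,I)=\frac{1}{q^{\#I}}\sum_{\gamma\in(\Fq)^I}\sum_{\substack{a_0+\cdots+a_{d-2}=k\\ a_j\geq 0}}\psi\Big(\sum_{i\in I}\gamma_i\sum_{j=0}^{d-2}a_j\zeta^{ij}\Big).
$$
Next I would swap the order of summation inside the argument of $\psi$, writing $\sum_{i\in I}\gamma_i\sum_j a_j\zeta^{ij}=\sum_{j=0}^{d-2}a_j\,w_j(\gamma)$ where $w_j(\gamma):=\sum_{i\in I}\gamma_i\zeta^{ij}$, and use additivity of $\psi$ together with the fact that each $a_j$ is a nonnegative integer to factor $\psi(\sum_j a_j w_j(\gamma))=\prod_{j=0}^{d-2}\psi(w_j(\gamma))^{a_j}$.

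Finally I would multiply by $T^k$ and sum over all $k\geq 0$. Writing $T^k=\prod_j T^{a_j}$ and summing each $a_j$ independently over $\Z_{\geq 0}$ collapses the inner sum into a product of geometric series:
$$
\sum_{a_0,\ldots,a_{d-2}\geq 0}\prod_{j=0}^{d-2}\big(\psi(w_j(\gamma))T\big)^{a_j}=\prod_{j=0}^{d-2}\big(1-\psi(w_j(\gamma))T\big)^{-1},
$$
which, after substituting back $w_j(\gamma)=\sum_{i\in I}\gamma_i\zeta^{ji}$, yields exactly the claimed expression for $F_{d-1}(I;T)$.

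There is no serious obstacle here; the only points requiring care are bookkeeping ones. I should note that the manipulation is cleanest when carried out at the level of formal power series in $T$, so that the interchange of the (finite) sum over $\gamma$ with the (infinite) sum over $k$ and the geometric-series expansion need no analytic justification; alternatively, since each $\psi(w_j(\gamma))$ is a root of unity, every geometric series converges for $|T|<1$. I would also make explicit that the orthogonality relation is applied in $\Fq$ itself — the constraints $\sum_j a_j\zeta^{ij}=0$ are $\Fq$-linear once the integers $a_j$ are reduced modulo $p$ — and that $\psi(\gamma_i\zeta^{ij})^{a_j}=\psi(a_j\gamma_i\zeta^{ij})$ uses precisely the additivity of $\psi$ and $a_j\in\Z_{\geq 0}$.
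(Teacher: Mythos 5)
Your proposal is correct and follows essentially the same route as the paper: detect the $\#I$ linear constraints via the orthogonality relation $\delta(a)=\frac{1}{q}\sum_{\gamma\in\Fq}\psi(\gamma a)$, expand the product over $i\in I$ into a single average over $\gamma\in(\Fq)^I$, and then factor the sum over $(a_0,\ldots,a_{d-2})$ into a product of geometric series. Your added remarks about working with formal power series and about the constraints living in $\Fq$ after reducing the $a_j$ modulo $p$ are sensible bookkeeping points that the paper leaves implicit.
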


\begin{proof}
 From the definition,
$$
F_{d-1}(I;T)=\sum_{(a_0,\ldots,a_{d-2})\in\Z^{d-1}_{\geq 0}}\prod_{i\in I}\delta(a_0+a_1\zeta^i+\cdots+a_{d-2}\zeta^{i(d-2)})T^{a_0+a_1+\cdots+a_{d-2}}
$$
where $\delta(a)=1$ if $a=0$, $0$ otherwise. Equivalently, $\delta(a)=\frac{1}{q}\sum_{\gamma\in \Fq}\psi(\gamma a)$. So we get
\begin{align*}
F_{d-1}(I;T) &= \sum_{(a_0,\ldots,a_{d-2})\in\Z^{d-1}_{\geq 0}}\prod_{i\in I}\frac{1}{q}\sum_{\gamma_i\in \Fq}\psi(\gamma_i(a_0+a_1\zeta^i+\cdots+a_{d-2}\zeta^{i(d-2)}))T^{a_0+a_1+\cdots+a_{d-2}} \\
&= \sum_{(a_0,\ldots,a_{d-2})\in\Z^{d-1}_{\geq 0}}\sum_{\gamma\in (\Fq)^I}\frac{1}{q^{\# I}}\left(\prod_{i\in I}\psi(\gamma_i a_0)\right)T^{a_0}\left(\prod_{i\in I}\psi(\gamma_i a_1 \zeta^i)\right)T^{a_1}\cdots\left(\prod_{i\in I}\psi(\gamma_i a_{d-2} \zeta^{(d-2)i})\right)T^{a_{d-2}} \\
&= \frac{1}{q^{\# I}}\sum_{\gamma\in (\Fq)^I}\sum_{(a_0,\ldots,a_{d-2})\in\Z^{d-1}_{\geq 0}}\psi(\sum_{i\in I}\gamma_i )^{a_0}T^{a_0}\psi(\sum_{i\in I}\gamma_i \zeta^i)^{a_1}T^{a_1}\cdots\psi(\sum_{i\in I}\gamma_i  \zeta^{(d-2)i})^{a_{d-2}}T^{a_{d-2}} \\
&= \frac{1}{q^{\# I}}\sum_{\gamma\in (\Fq)^I}\left(\sum_{a_0\in\Z_{\geq 0}}\psi(\sum_{i\in I}\gamma_i )^{a_0}T^{a_0}\right)\left(\sum_{a_1\in\Z_{\geq 0}}\psi(\sum_{i\in I}\gamma_i \zeta^i)^{a_1}T^{a_1}\right)\cdots\left(\sum_{a_{d-2}\in\Z_{\geq 0}}\psi(\sum_{i\in I}\gamma_i  \zeta^{(d-2)i})^{a_{d-2}}T^{a_{d-2}}\right) \\
&= \frac{1}{q^{\#I}}\sum_{\gamma\in{(\Fq)^I}}\prod_{j=0}^{d-2}(1-\psi(\sum_{i\in I}\gamma_i\zeta^{ji})T)^{-1}.
\end{align*}
\end{proof}

Write $g(t)=\sum_{j=0}^d b_j t^j$, and let $J=\{1\leq j\leq d|b_j\neq 0\}$ and $J_{\geq j}:=J\cap \{j,j+1,\ldots,d\}$ for every $j\in\{1,\ldots,d,d+1\}$. We have
\begin{align*}
\Swan_\infty\Sym^k[d-1]^\star\Aif &=\sum_{a_0+a_1+\cdots+a_{d-2}=k}\Swan_\infty\LL_{\psi(\sum_{i=0}^{d-2} a_i g(\zeta^it))}\otimes\LL_{\rho^{dk}(t)} \\
&=\sum_{a_0+a_1+\cdots+a_{d-2}=k}\deg(\sum_{i=0}^{d-2} a_i g(\zeta^i t))
\end{align*}
and
$$
\sum_{i=0}^{d-2} a_i g(\zeta^i t)=\sum_{i=0}^{d-2} a_i\sum_{j=0}^d b_j\zeta^{ij} t^j=\sum_{j=0}^d(b_j\sum_{i=0}^{d-2}\zeta^{ij})t^j
$$
so its degree is the greatest $j$ such that $b_j\sum_{i=0}^{d-2}\zeta^{ij}\neq 0$. Therefore we get
\begin{align*}
(d-1)\Swan_\infty\Sym^k\Aif &= \Swan_\infty\Sym^k[d-1]^\star\Aif \\
&= \sum_{j\in J}j\cdot(\# S_{d-1}(k,J_{\geq j+1})-\# S_{d-1}(k,J_{\geq j})) \\
&= d{{k+d-2}\choose{d-2}}-\sum_{j\in J} h(j)\cdot \# S_{d-1}(k,J_{\geq j})
\end{align*}
where $h(j):=j-\sup(J - J_{\geq j})$ is the ``gap'' between the $t^j$ term and the next lower degree term in $g(t)$. Taking the corresponding generating function we get the formula

\begin{cor}
 Let $G(f;T):=\sum_{k=0}^\infty (\Swan_\infty\Sym^k\Aif) T^k$, then
$$
G(f;T)=\frac{d}{(d-1)(1-T)^{d-1}}-\frac{1}{d-1}\sum_{j\in J} h(j)\cdot F_{d-1}(J_{\geq j};T)
$$
\end{cor}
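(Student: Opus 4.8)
The plan is to deduce $G(f;T)$ directly from the coefficientwise identity for $(d-1)\Swan_\infty\Sym^k\Aif$ established immediately above, simply by assembling it into a generating series in $T$. First I would divide that identity by $d-1$ to isolate $\Swan_\infty\Sym^k\Aif$, obtaining
$$
\Swan_\infty\Sym^k\Aif = \frac{1}{d-1}\left[\, d{{k+d-2}\choose{d-2}} - \sum_{j\in J} h(j)\cdot \# S_{d-1}(k,J_{\geq j}) \,\right],
$$
then multiply through by $T^k$ and sum over $k\geq 0$ to form $G(f;T)$.

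For the first summand the only ingredient needed is the standard negative-binomial generating function $\sum_{k\geq 0}{{k+n-1}\choose{n-1}}T^k = (1-T)^{-n}$; applying it with $n=d-1$ converts $\frac{d}{d-1}\sum_{k\geq 0}{{k+d-2}\choose{d-2}}T^k$ into $\frac{d}{(d-1)(1-T)^{d-1}}$, which is exactly the first term of the claimed formula. For the second summand I would interchange the sum over $j\in J$ with the sum over $k$, after which the inner series $\sum_{k\geq 0}\# S_{d-1}(k,J_{\geq j})T^k$ is, by the very definition of $F_{d-1}$, equal to $F_{d-1}(J_{\geq j};T)$; carrying along the scalar $-\frac{1}{d-1}h(j)$ then produces the term $-\frac{1}{d-1}\sum_{j\in J} h(j)\cdot F_{d-1}(J_{\geq j};T)$.

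There is essentially no obstacle here, since every step is a formal rearrangement of an equality already proved for each coefficient of $T^k$. The only point worth a remark is the interchange of the two summations, which is unconditionally valid because $J$ is a finite set, so the sum over $j$ commutes termwise with the sum over $k$. Equivalently, one may simply observe that the two sides of the asserted formula have identical $T^k$-coefficients for every $k$ — which is precisely the identity derived above — and conclude at once.
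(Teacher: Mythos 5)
Your proposal is correct and matches the paper's (essentially implicit) argument: the corollary is obtained exactly by dividing the displayed identity for $(d-1)\Swan_\infty\Sym^k\Aif$ by $d-1$, summing against $T^k$, and invoking the negative-binomial series together with the definition of $F_{d-1}$. The paper simply says ``taking the corresponding generating function'' at this point, so your write-up supplies the same routine verification in slightly more detail.
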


Using the previous formula for the degree, we deduce

\begin{cor}\label{degreeM}
The degree of $M_k(f;T)$ is the $k$-th coefficient of the power series expansion of
$$
\frac{1}{(d-1)(1-T)^{d-1}}-\frac{1}{d-1}\sum_{j\in J} h(j)\cdot F_{d-1}(J_{\geq j};T).
$$

\end{cor}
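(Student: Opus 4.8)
The plan is to obtain Corollary~\ref{degreeM} by combining the degree formula \eqref{degree} with the generating function for the Swan conductor established in the preceding Corollary; no new geometric input is needed, so this is a packaging step. Recall from \eqref{degree} that for each $k$ one has
$$
\deg(M_k(f,T)) = \Swan_\infty(\Sym^k\Aif) - \binom{k+d-2}{d-2}.
$$
Passing to generating series in $T$, I would write
$$
\sum_{k=0}^\infty \deg(M_k(f,T))\, T^k = G(f;T) - \sum_{k=0}^\infty \binom{k+d-2}{d-2} T^k,
$$
where $G(f;T)=\sum_k \Swan_\infty(\Sym^k\Aif)\,T^k$ is given explicitly by the Corollary just proved.

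The single identity to record is the generating function for the rank. Since $\binom{k+d-2}{d-2}$ is the dimension of the $k$-th symmetric power of a $(d-1)$-dimensional space (equivalently, the number of monomials of degree $k$ in $d-1$ variables), the standard negative-binomial expansion gives
$$
\sum_{k=0}^\infty \binom{k+d-2}{d-2} T^k = \frac{1}{(1-T)^{d-1}}.
$$
Substituting this and the formula for $G(f;T)$, the two multiples of $(1-T)^{-(d-1)}$ combine with coefficient $\tfrac{d}{d-1}-1=\tfrac{1}{d-1}$, so that
$$
\sum_{k=0}^\infty \deg(M_k(f,T))\, T^k = \frac{1}{(d-1)(1-T)^{d-1}} - \frac{1}{d-1}\sum_{j\in J} h(j)\, F_{d-1}(J_{\geq j};T).
$$
Reading off the $k$-th coefficient then yields the stated formula.

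The hard part, honestly, is already behind us: all the cohomological content lives in \eqref{degree} (via Grothendieck--Ogg--Shafarevich) and in the computation of $G(f;T)$ through Lemma~\ref{induced} and the two preceding Lemmas. The only thing requiring a line of justification is the negative-binomial identity above, and the only thing to check by hand is the cancellation $\tfrac{d}{d-1}-1=\tfrac{1}{d-1}$ of the two $(1-T)^{-(d-1)}$ terms, which I expect to be entirely routine.
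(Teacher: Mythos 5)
Your proposal is correct and follows exactly the paper's (implicit) argument: the paper deduces Corollary~\ref{degreeM} directly from the degree formula \eqref{degree} and the generating function $G(f;T)$ of the preceding corollary, with the same use of $\sum_k \binom{k+d-2}{d-2}T^k = (1-T)^{-(d-1)}$ and the cancellation $\frac{d}{d-1}-1=\frac{1}{d-1}$. Nothing is missing.
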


\begin{cor}
 For every $J\subset \{1,\ldots,d-1\}$, let ${\mathcal P}_d(J)$ be the subspace of the affine space ${\mathcal P}_d$ of polynomials of degree $d$ over $k$ such that $b_j=0$ if and only if $j\in J$. The sets $\{{\mathcal P}_d(J)|J\subseteq \{1,\ldots,d-1\}\}$ define a stratification of ${\mathcal P}_d$ such that the degree of $M_k(f;T)$ is constant in each stratum.
\end{cor}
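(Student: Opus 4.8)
The plan is to observe that Corollary \ref{degreeM} expresses $\deg M_k(f;T)$ as a quantity depending on $f$ only through the support of the associated polynomial $g$, and then to verify that this support pattern cuts $\mathcal{P}_d$ into locally closed pieces forming a stratification.

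First I would isolate the combinatorial content. Writing $g(t)=\sum_j b_j t^j$ and $S=\{1\le j\le d : b_j\neq 0\}$, every ingredient of the generating series in Corollary \ref{degreeM} is determined by $S$ alone: the sum runs over $j\in S$, the gap $h(j)=j-\sup(S-S_{\ge j})$ is a function of $S$, and each $F_{d-1}(S_{\ge j};T)$ depends only on the truncation $S_{\ge j}=S\cap\{j,\ldots,d\}$. Hence the whole power series, and therefore $\deg M_k(f;T)$ for all $k$ simultaneously, depends on $f$ only through $S$. I would also record that $b_d$ never vanishes: comparing top-degree terms in $f(v(t))+v(t)t^{d-1}$ gives $b_d=s_0(d-1)/d\neq 0$, using $s_0^{d-1}=-1/(dc_d)$ and $p>d$. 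Thus $d\in S$ always, and the only free data is the vanishing pattern of $b_1,\ldots,b_{d-1}$, indexed by subsets $J\subseteq\{1,\ldots,d-1\}$ as in the statement (with $S=\{1,\ldots,d\}\setminus J$).

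Next I would use the algebraicity already noted in the text: each $b_j$ is a polynomial in the coefficients $c_i$ of $f$, so $\{b_j=0\}$ is Zariski closed in $\mathcal{P}_d$ and $\{b_j\neq 0\}$ is open. (The individual $b_j$ involve the auxiliary choice of a $(d-1)$-th root $s_0$, but replacing $s_0$ by $\zeta s_0$ only rescales $g(t)\mapsto g(\zeta t)$, multiplying each $b_j$ by a unit; hence the vanishing pattern, and with it each stratum, is independent of that choice.) For fixed $J\subseteq\{1,\ldots,d-1\}$ the stratum $\mathcal{P}_d(J)$ is the intersection of the closed conditions $\{b_j=0 : j\in J\}$ with the open conditions $\{b_j\neq 0 : j\in\{1,\ldots,d-1\}\setminus J\}$, hence is locally closed; the $\mathcal{P}_d(J)$ are pairwise disjoint and exhaust $\mathcal{P}_d$, and the frontier condition holds because $\overline{\mathcal{P}_d(J)}\subseteq\{b_j=0\ \forall j\in J\}=\bigsqcup_{J'\supseteq J}\mathcal{P}_d(J')$ is a union of strata. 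Constancy of $\deg M_k$ on each $\mathcal{P}_d(J)$ is then immediate from the first step, since $S$ is constant there.

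The only point needing genuine care is the first one: confirming that Corollary \ref{degreeM} retains no dependence on the actual values of the nonzero $b_j$, only on which of them vanish. Once the formula is read purely in terms of $S$, $S_{\ge j}$ and $h(j)$ this is transparent, and everything else is a formal consequence of the $b_j$ being polynomial in the $c_i$.
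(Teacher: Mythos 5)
Your argument is correct and is essentially the paper's (implicit) one: the corollary is stated without proof as an immediate consequence of Corollary \ref{degreeM} together with the earlier remark that the coefficients $b_j$ of $g$ are polynomial in the coefficients of $f$, which is exactly the reduction you carry out. You in fact supply details the paper leaves tacit --- that $b_d=s_0(d-1)/d$ never vanishes, that the vanishing pattern is independent of the choice of the root $s_0$ since changing it only rescales $g(t)\mapsto g(\zeta t)$, and that the resulting pieces are locally closed, disjoint and exhaustive --- and you correctly account for the flip of the index set $J$ between Corollary \ref{degreeM} (where $J$ is the support of $g$) and the present statement (where $J$ is the vanishing set).
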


\section{The trivial factor}

Suppose $p>d$ and the monodromy of $\Aif$ is not finite. We will now study the weights of the (reciprocal) roots of the polynomial $M_k(f,T)$. Let us first consider the easier case where $d$ is even, and therefore $\Aif$ is isomorphic to $[d-1]_\star\LL_{\psi(g(t))}\otimes\LL_{\rho(d(d-1)c_d/2)}\otimes{\mathcal Q}$ as a representation of $D_\infty$. Let $D^{d-1}_\infty=\Gal(\overline{\Fq((1/t))}/\Fq((1/t^{1/(d-1)})))$, denote by $\alpha:D_\infty^{d-1}\to\QQ^\star$ the character corresponding to the sheaf $\LL_{\psi(g)}$, and let $b\in I_\infty$ be a generator of the cyclic group $D_\infty/D_\infty^{d-1}\cong I_\infty/I_\infty^{d-1}$. By the explicit description of induced representations, there is a basis $\{v_0,\ldots,v_{d-2}\}$ of the underlying vector space $V$ such that $a\cdot v_0=\alpha(a)v_0$ for every $a\in I_\infty^{d-1}$ and $b\cdot v_i=v_{i+1}$ for $i=0,\ldots,d-3$. Then $b\cdot v_{d-2}=b^{d-1}\cdot v_0=\alpha(b^{d-1})v_0$. Replacing $b$ by $a^{-1}b$, where $a\in I_\infty^{d-1}$ is an element such that $\alpha(a)^{d-1}=\alpha(b^{d-1})$ (which is always possible since the values of $\alpha$ are the $p$-th roots of unity and $d-1$ is prime to $p$ since $p > d$) we may assume without loss of generality that $\alpha(b^{d-1})=1$.

Furthermore, for any $a\in I_\infty^{d-1}$ we have $a\cdot v_i=(ab^i)\cdot v_0=(b^ib^{-i}ab^i)\cdot v_0=b^i\cdot\alpha(b^{-i}ab^i)v_0=\alpha(b^{-i}ab^i)v_i$. So the restriction of $\Aif$ to $D_\infty^{d-1}$ is the direct sum of the characters $a\mapsto\alpha_i(a):=\alpha(b^{-i}ab^i)$. But we already know that it is the direct sum of the characters associated to the sheaves $\LL_{\psi(g(\zeta^i t))}\otimes\LL_{\rho(d(d-1)c_d/2)}\otimes{\mathcal Q}$, so these two sets of characters are identical. Replacing $b$ by a suitable power of itself we may assume that $\alpha_i$ is the character associated to $\LL_{\psi(g(\zeta^it))}\otimes\LL_{\rho(d(d-1)c_d/2)}\otimes{\mathcal Q}$. In particular, $\prod_{i=0}^{d-2}\alpha_i^{a_i}$ is geometricaly trivial (that is, trivial on $I_\infty^{d-1}$) if and only if $\sum a_i g (\zeta^it)$ is a constant in $\Fq[t]$, that is, if and only if $\sum a_i \zeta^{ij}=0$ for every $j\in J$.

We turn now to the case $d$ odd. Let $\chi$ be a multiplicative character of $\Fq$ of order $2(d-1)$ (which exists, since we are assuming that $\Fq$ contains the $2(d-1)$-th roots of unity). Then by the projection formula $\Aif$ is isomorphic to $[d-1]_\star(\LL_{\psi(g(t))}\otimes\LL_{\rho(s_0t)})\otimes\LL_{\rho(d(d-1)c_d/2)}\otimes{\mathcal Q}\cong([d-1]_\star\LL_{\psi(g(t))})\otimes\LL_{\chi(s_0t)}\otimes\LL_{\rho(d(d-1)c_d/2)}\otimes{\mathcal Q}$. Let $\alpha_i:D_\infty^{d-1}\to\QQ^\star$ (respectively $\beta:D_\infty\to\QQ^\star$) be the character corresponding to the sheaf $\LL_{\psi(g(\zeta^i t))}$ (resp. $\LL_{\chi(s_0t)}$). Proceeding as in the $d$ even case, we find a generator $b\in I_\infty$ of $D_\infty/D_\infty^{d-1}$ and a basis $\{v_0,\ldots,v_{d-2}\}$ of $V$ such that $a\cdot v_i=\alpha_i(a)\beta(a)v_i$ for $a\in D^{d-1}_\infty$ and $b\cdot v_i=\beta(b)v_{i+1}$ for $i=0,\ldots,d-3$, $b\cdot v_{d-2}=\beta(b)v_0$. In this case, $\prod_{i=0}^{d-2}\alpha_i^{a_i}\beta^{a_i}$ is trivial on $I^{d-1}_\infty$ if and only if $\sum a_i g (\zeta^it)$ is a constant in $\Fq[t]$ and $\sum a_i$ is even (since $\alpha_i$ has order $p$ and $\beta$ restricted to $I^{d-1}_\infty$ has order $2$).

We can now compute the dimension of the invariant subspace of the action of $I_\infty$ on $\Sym^k\Aif$, in very much the same way it is done for the Kloosterman sheaf in \cite[Lemma 2.1]{FuWan-$L$-functionssymmetricproducts-2005}. Its underlying vector space is $\Sym^k V$. An element $w$ is given by a linear combination
$$
w=\sum_{a_0+\cdots+a_{d-2}=k} c_{a_0\cdots a_{d-2}}v_0^{a_0}\cdots v_{d-2}^{a_{d-2}}.
$$

In the $d$ even case we have
$$
a\cdot\sum_{a_0+\cdots+a_{d-2}=k} c_{a_0\cdots a_{d-2}}v_0^{a_0}\cdots v_{d-2}^{a_{d-2}}
=\sum_{a_0+\cdots+a_{d-2}=k} c_{a_0\cdots a_{d-2}}(\alpha_0^{a_0}\cdots\alpha_{d-2}^{a_{d-2}})(a)v_0^{a_0}\cdots v_{d-2}^{a_{d-2}}
$$
for $a\in I_\infty^{d-1}$ and
$$
b\cdot \sum_{a_0+\cdots+a_{d-2}=k} c_{a_0\cdots a_{d-2}}v_0^{a_0}\cdots v_{d-2}^{a_{d-2}}=
\sum_{a_0+\cdots+a_{d-2}=k} c_{a_0\cdots a_{d-2}}v_1^{a_0}v_2^{a_1}\cdots v_0^{a_{d-2}}.
$$
So $w$ is fixed by $I_\infty$ if and only if the character $\alpha_0^{a_0}\cdots\alpha_{d-2}^{a_{d-2}}$ is trivial whenever $c_{a_0\cdots a_{d-2}}\neq 0$ and $c_{a_0\cdots a_{d-2}}=c_{a_{d-2}a_0\cdots a_{d-3}}$ for all $a_0,\ldots,a_{d-2}$. A basis for the invariant subspace is thus given by all distinct sums of the form (setting $v_{d-1+l}:=v_l$ for all $l\geq 0$):
$$
\sum_{j=0}^{d-2} v_j^{a_0}v_{j+1}^{a_1}\cdots v_{j+d-2}^{a_{d-2}}
$$
for all $a_0,\ldots,a_{d-2}$ such that $\alpha_0^{a_0}\cdots\alpha_{d-2}^{a_{d-2}}$ is trivial, that is, such that $\sum a_i\zeta^{ij}=0$ in $\Fq$ for every $j\in J$.

In the $d$ odd case we get
$$
g\cdot\sum_{a_0+\cdots+a_{d-2}=k} c_{a_0\cdots a_{d-2}}v_0^{a_0}\cdots v_{d-2}^{a_{d-2}}
=\sum_{a_0+\cdots+a_{d-2}=k} c_{a_0\cdots a_{d-2}}(\alpha_0^{a_0}\cdots\alpha_{d-2}^{a_{d-2}})(g)\beta^k(g)v_0^{a_0}\cdots v_{d-2}^{a_{d-2}}
$$
for $g\in I_\infty^{d-1}$ and
$$
h\cdot \sum_{a_0+\cdots+a_{d-2}=k} c_{a_0\cdots a_{d-2}}v_0^{a_0}\cdots v_{d-2}^{a_{d-2}}=
\sum_{a_0+\cdots+a_{d-2}=k} c_{a_0\cdots a_{d-2}}\beta(h)^k v_1^{a_0}v_2^{a_1}\cdots v_0^{a_{d-2}}.
$$
So $w$ is fixed by $I_\infty$ if and only if the character $\alpha_0^{a_0}\cdots\alpha_{d-2}^{a_{d-2}}\beta^k$ of $I_\infty^{d-1}$ is trivial whenever $c_{a_0\cdots a_{d-2}}\neq 0$ and $c_{a_0\cdots a_{d-2}}=c_{a_{d-2}a_0\cdots a_{d-3}}\beta(h)^k$ for all $a_0,\ldots,a_{d-2}$. Since all $\alpha_i$'s have order $p$ and the restriction of $\beta$ to $I_\infty^{d-1}$ has order $2$, $\alpha_0^{a_0}\cdots\alpha_{d-2}^{a_{d-2}}\beta^k$ is trivial if and only if both $\alpha_0^{a_0}\cdots\alpha_{d-2}^{a_{d-2}}$ and $\beta^k$ are trivial as characters of $I_\infty^{d-1}$, that is, if and only if $\sum a_i\zeta^{ij}=0$ in $\Fq$ for every $j\in J$ and $k$ is even. In particular, there are no non-zero invariants for $I_\infty$ if $k$ is odd. If $k$ is even, a generating set for the invariant subspace is given by all distinct sums of the form
$$
\sum_{j=0}^{d-2} \beta(h)^{jk}v_j^{a_0}v_{j+1}^{a_1}\cdots v_{j+d-2}^{a_{d-2}}
$$
for all $a_0,\ldots,a_{d-2}$ such that $\sum a_i\zeta^{ij}=0$ in $\Fq$ for every $j\in J$. Let $r$ be the size of the orbit of $(a_0,\ldots,a_{d-2})$ under the action of ${\mathbb Z}/(d-1){\mathbb Z}$ by cyclic permutations. If $r\neq d-1$, we can write
$$
\sum_{j=0}^{d-2} \beta(h)^{jk}v_j^{a_0}v_{j+1}^{a_1}\cdots v_{j+d-2}^{a_{d-2}}=\sum_{j=0}^{r-1} \beta(h)^{jk}(1+\beta(h)^{rk}+\cdots+\beta(h)^{\left(\frac{d-1}{r}-1\right)rk})v_j^{a_0}v_{j+1}^{a_1}\cdots v_{j+d-2}^{a_{d-2}}.
$$
Notice that $k$ must be a multiple of $\frac{d-1}{r}$, since $k=\sum_{i=0}^{d-2}a_i=\frac{d-1}{r}\sum_{i=0}^{r-1}a_i$. If $\frac{rk}{d-1}$ is odd we have
$$
1+\beta(h)^{rk}+\cdots+\beta(h)^{\left(\frac{d-1}{r}-1\right)rk}=\frac{1-\beta(h)^{(d-1)k}}{1-\beta(h)^{rk}}=0,
$$
so the above sum vanishes. On the other hand, if $\frac{rk}{d-1}$ is even it is clear that the element
$$
\sum_{j=0}^{d-2} \beta(h)^{jk}v_j^{a_0}v_{j+1}^{a_1}\cdots v_{j+d-2}^{a_{d-2}}=\frac{d-1}{r}\sum_{j=0}^{r-1} \beta(h)^{jk}v_j^{a_0}v_{j+1}^{a_1}\cdots v_{j+d-2}^{a_{d-2}}
$$
is non-zero, and to different orbits correspond different elements. To summarize, we have

\begin{prop} Let $T_{d-1}(k,J)$ be the set of orbits of the action of ${\mathbb Z}/(d-1){\mathbb Z}$ on the set $S_{d-1}(k,J)$ by cyclic permutations, and let $U_{d-1}(k,J)$ be the subset of orbits such that $\frac{rk}{d-1}$ is even, where $r$ is their cardinality. If $d$ is even, the invariant subspace of the representation $\Sym^k\Aid$ of $I_\infty$ has dimension $\#T_{d-1}(k,J)$. If $d$ is odd and $k$ is even, it has dimension $\#U_{d-1}(k,J)$. If $d$ and $k$ are odd, the representation has no non-zero invariants.
\end{prop}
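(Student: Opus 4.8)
The plan is to read off the dimension of the $I_\infty$-invariants directly from the explicit spanning sets produced in the discussion preceding the statement, the only genuinely new ingredient being a bookkeeping of cyclic orbits. In each of the three cases the invariants of $\Sym^k\Aif$ are spanned by ``orbit sums'' attached to tuples $(a_0,\ldots,a_{d-2})$ satisfying $\sum_i a_i\zeta^{ij}=0$ for every $j\in J$, i.e. to elements of $S_{d-1}(k,J)$, so the first thing I would record is that such a sum is unchanged when the tuple is replaced by a cyclic shift of itself. Hence the spanning set is naturally indexed not by $S_{d-1}(k,J)$ but by the orbit set $T_{d-1}(k,J)$, and the remaining work is to decide which orbit sums are nonzero and to check that the nonzero ones are linearly independent.

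For $d$ even this is immediate. The preceding computation exhibits the invariant subspace as the span of $\sum_{j=0}^{d-2} v_j^{a_0}v_{j+1}^{a_1}\cdots v_{j+d-2}^{a_{d-2}}$ for $(a)\in S_{d-1}(k,J)$, with indices of the $v$'s read modulo $d-1$. Two tuples in the same $\Z/(d-1)\Z$-orbit give the same sum, while tuples in distinct orbits give sums supported on disjoint collections of monomials in the $v_i$; since each such sum is manifestly nonzero, these orbit sums form a basis and the dimension equals $\#T_{d-1}(k,J)$.

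For $d$ odd I would track the extra twist by the order-$2$ factor $\beta$. The preceding analysis already disposes of $k$ odd: triviality on $I_\infty^{d-1}$ forces $\beta^k$ to be trivial there, but $\beta|_{I_\infty^{d-1}}$ has order $2$, so there are no invariants, which is the third assertion. For $k$ even the candidate invariant attached to $(a)$ is $\sum_{j=0}^{d-2}\beta(b)^{jk}v_j^{a_0}\cdots v_{j+d-2}^{a_{d-2}}$, where $b$ generates $I_\infty/I_\infty^{d-1}$. Writing $r$ for the orbit size of $(a)$ and using that the monomial $v_j^{a_0}\cdots v_{j+d-2}^{a_{d-2}}$ is periodic of period $r$ in $j$, one groups terms and is left with the scalar $\sum_{l=0}^{(d-1)/r-1}(\beta(b)^{rk})^l$ multiplying $\sum_{j=0}^{r-1}\beta(b)^{jk}v_j^{a_0}\cdots$. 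The value of this geometric sum is dictated by the order of $\beta(b)$: since $\chi$ has order $2(d-1)$ we have $\beta(b)^{d-1}=-1$, whence $\beta(b)^{(d-1)k}=(-1)^k=1$ and $\beta(b)^{rk}=(-1)^{rk/(d-1)}$. Thus the scalar is $0$ exactly when $\tfrac{rk}{d-1}$ is odd and equals $(d-1)/r\neq 0$ when $\tfrac{rk}{d-1}$ is even. The surviving orbit sums, indexed by $U_{d-1}(k,J)$, are again supported on disjoint monomial sets and hence independent, giving dimension $\#U_{d-1}(k,J)$.

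The one step I would single out as the crux is the determination $\beta(b)^{d-1}=-1$, since the entire even/odd dichotomy, and in particular the selection of $U_{d-1}(k,J)$ inside $T_{d-1}(k,J)$, rests on $\beta(b)$ being a primitive $2(d-1)$-th root of unity rather than merely a $(d-1)$-th root; once this is pinned down from the order of $\chi$ and the construction of the basis $\{v_0,\ldots,v_{d-2}\}$, the remaining vanishing computation and the disjoint-support argument for linear independence are routine.
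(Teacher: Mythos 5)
Your argument is correct and matches the paper's own treatment, which likewise obtains the invariants as orbit sums over $S_{d-1}(k,J)$, disposes of odd $k$ via the order-$2$ restriction of $\beta$ to $I_\infty^{d-1}$, and detects the vanishing orbits through the geometric series $\sum_l \beta(b)^{lrk}$, i.e.\ through $\beta(b)^{rk}=(-1)^{rk/(d-1)}$. The only cosmetic difference is that you make the linear-independence of the surviving orbit sums (disjoint monomial supports) and the identity $\beta(b)^{d-1}=-1$ explicit, both of which the paper leaves implicit.
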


The sequences $\#T_{d-1}(k,J)$ and $\#U_{d-1}(k,J)$ can also be described by means of generating functions. By Burnside's lemma, the dimension of the invariant subspace for $d$ even is given by
$$
\#T_{d-1}(k,J)=\frac{1}{d-1}\sum_{r=1}^{d-1}\#\{(a_0,a_1,\ldots,a_{d-2})|a_i=a_{i+r\mbox{ mod }d-1}\}=\frac{1}{d-1}\sum_{r|d-1}\phi(\frac{d-1}{r})\#S_r(\frac{kr}{d-1},J)
$$
where $S_r(k,J)=\emptyset$ if $k$ is not an integer and $\phi$ is Euler's totient function. So the generating function for the sequence $\{\# T_{d-1}(k,J)|k\geq 0\}$ is
\begin{align*}
G_{d-1}(J;T) :&= \sum_{k=0}^\infty\#T_{d-1}(k,J)T^k \\
&= \sum_{k=0}^{\infty}\frac{1}{d-1}T^k\sum_{r|d-1}\phi(\frac{d-1}{r})\#S_r(\frac{kr}{d-1},J) \\
&= \frac{1}{d-1}\sum_{r|d-1}\phi(\frac{d-1}{r})\sum_{\frac{d-1}{r}|k}\#S_r(\frac{kr}{d-1},J)T^k \\
&= \frac{1}{d-1}\sum_{r|d-1}\phi(\frac{d-1}{r})\sum_{s=0}^\infty \#S_r(s,J)T^{\frac{d-1}{r}s} \\
&=\frac{1}{d-1}\sum_{r|d-1}\phi(\frac{d-1}{r})F_r(J;T^{\frac{d-1}{r}})
\end{align*}
Next, suppose that $d$ is odd, and let $(a_0,\ldots,a_{d-2})\in S_{d-1}(k,J)$. Let $r$ be the number of elements in its orbit. Then $\sum_{i=0}^{r-1}a_i=\frac{kr}{d-1}$. We want to count the number of orbits such that this value is even. Since $k=\frac{kr}{d-1}\cdot\frac{d-1}{r}$, if the largest power of $2$ that divides $d-1$ is smaller than the largest power of $2$ dividing $k$, $\frac{kr}{d-1}$ must always be even. Suppose that the largest power of $2$ that divides $k$, $2^{\alpha(k)}$, divides $d-1$. Then $\frac{kr}{d-1}$ is odd if and only if $2^{\alpha(k)}$ divides $\frac{d-1}{r}$, if and only if $r$ divides $\frac{d-1}{2^{\alpha(k)}}$. Therefore $\#U_{d-1}(k,J)=\#T_{d-1}(k,J)$ if $2^{\alpha(k)}$ does not divide $d-1$ and $\#T_{d-1}(k,J)-\#T_{\frac{d-1}{2^{\alpha(k)}}}(\frac{k}{2^{\alpha(k)}},J)$ if it does. The generating function is then
\begin{align*}
\sum_{k=0}^\infty\#U_{d-1}(k,J)T^k &= \sum_{k=0}^\infty\#T_{d-1}(k,J)T^k-\sum_{j\geq 1; 2^j|d-1}\sum_{l\text{ odd}}\#T_{\frac{d-1}{2^j}}(l,J)T^{2^jl} \\
&= G_{d-1}(J;T)-\sum_{j\geq 1;2^j|d-1}H_{\frac{d-1}{2^j}}(J;T^{2^j})
\end{align*}
where
$$
H_{r}(J;T):=\frac{1}{2}(G_{r}(J;T)-G_{r}(J;-T)).
$$

Let $F\in D_\infty^{d-1}\subset D_\infty$ be a geometric Frobenius element, and $w=\sum_{j=0}^{d-2} v_j^{a_0}v_{j+1}^{a_1}\cdots v_{j+d-2}^{a_{d-2}}$ (resp. $w=\sum_{j=0}^{d-2} \beta(h)^{jk}v_j^{a_0}v_{j+1}^{a_1}\cdots v_{j+d-2}^{a_{d-2}}$) a generator of the $I_\infty$-invariant subspace of $\Sym^k V$. $F$ acts on $v_j^{a_0}v_{j+1}^{a_1}\cdots v_{j+d-2}^{a_{d-2}}$ via the character corresponding to $\LL_{\psi(\sum a_i g(\zeta^{j+i} t))}\otimes\LL^{\otimes k}_{\rho(d(d-1)c_d/2)}\otimes{\mathcal Q}^{\otimes k}$ (resp. $\LL_{\psi(\sum a_i g(\zeta^{j+i} t))}\otimes\LL_{\rho(\prod (s_0\zeta^{j+i}t)^{a_i})}\otimes\LL^{\otimes k}_{\rho(d(d-1)c_d/2)}\otimes{\mathcal Q}^{\otimes k}$). Since $\sum a_i g(\zeta^{j+i} t)$ must be a constant polynomial, we have $\LL_{\psi(\sum a_i g(\zeta^{j+i} t))}\cong\LL_{\psi(kb_0)}$. Additionally, if $d$ is odd and $k$ even, $\LL_{\rho(\prod (s_0t)^{a_i})}=\LL_{\rho(s_0t)^k}$ is trivial. We conclude:

\begin{prop} A Frobenius geometric element at infinity acts on the $I_\infty$-invariant subspace of $\Sym^k\Aif$ by multiplication by $\psi(kb_0)\rho(d(d-1)c_d/2)^k g(\psi,\rho)^k$.
\end{prop}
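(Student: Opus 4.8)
The plan is to rerun the eigenvector computation that produced the generating set for the $I_\infty$-invariant subspace in the previous proposition, this time tracking the arithmetic Frobenius action and not merely the inertial one. The key preliminary point is that the geometric Frobenius $F$ can be taken inside $D_\infty^{d-1}$: since $p>d$, the extension $\Fq((1/t^{1/(d-1)}))/\Fq((1/t))$ is totally and tamely ramified, so $D_\infty^{d-1}$ still surjects onto $\Gal(\FFq/\Fq)$ and a Frobenius lift lies in it. Hence $F$ acts diagonally on the basis $\{v_0,\ldots,v_{d-2}\}$ through the restricted characters, and therefore on each monomial $v_j^{a_0}v_{j+1}^{a_1}\cdots v_{j+d-2}^{a_{d-2}}$ by a scalar $\lambda_j$. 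The whole content of the statement is that $\lambda_j$ is independent of $j$ and equals the asserted constant; granting that, applying $F$ term by term to the generator $w=\sum_j v_j^{a_0}\cdots$ (or $w=\sum_j\beta(h)^{jk}v_j^{a_0}\cdots$ when $d$ is odd) gives $F\cdot w=\lambda w$ at once, and since such $w$ span the invariant subspace we are done.

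First I would read off $\lambda_j$ from the $D_\infty$-model \eqref{MonodromyAtInf} together with Lemma \ref{induced}. On $v_{j+i}$ the element $F$ acts through the character of $\LL_{\psi(g(\zeta^{j+i}t))}$, with an extra Kummer factor $\LL_{\rho^d(s_0\zeta^{j+i}t)}$ present only when $d$ is odd (for $d$ even $\rho^d$ is trivial), all tensored with the constant sheaves $\LL_{\rho(d(d-1)c_d/2)}\otimes{\mathcal Q}$. Forming the product over $i$ with exponents $a_i$ and using $\sum_i a_i=k$, the monomial transforms through $\LL_{\psi(\sum_i a_i g(\zeta^{j+i}t))}$ (times $\LL_{\rho^d(\prod_i (s_0\zeta^{j+i}t)^{a_i})}$ in the odd case) tensored with $\LL_{\rho(d(d-1)c_d/2)}^{\otimes k}\otimes{\mathcal Q}^{\otimes k}$. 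The two constant twists contribute the manifestly $j$-independent factors $\rho(d(d-1)c_d/2)^k$ and $g(\psi,\rho)^k$, the latter because geometric Frobenius acts on ${\mathcal Q}$ by the Gauss sum $g(\psi,\rho)$.

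The decisive step, and the one I expect to carry the real weight, is the analysis of the $\psi$-factor: I must show it collapses to a constant that does not depend on $j$, for this is exactly what forces $w$ to be a genuine eigenvector rather than a mixture of vectors with distinct eigenvalues. Expanding
$$
\sum_i a_i g(\zeta^{j+i}t)=\sum_{j'=0}^d\Big(b_{j'}\zeta^{jj'}\sum_i a_i\zeta^{ij'}\Big)t^{j'},
$$
every coefficient with $j'\geq 1$ vanishes: if $j'\notin J$ then $b_{j'}=0$, while if $j'\in J$ the defining condition of $S_{d-1}(k,J)$, which the exponent tuple satisfies by the previous proposition, forces $\sum_i a_i\zeta^{ij'}=0$. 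Only the constant term survives, giving $\sum_i a_i g(\zeta^{j+i}t)=kb_0$ for every $j$, so $\LL_{\psi(\sum_i a_i g(\zeta^{j+i}t))}\cong\LL_{\psi(kb_0)}$ with $j$-independent eigenvalue $\psi(kb_0)$. For $d$ odd, where $k$ is necessarily even (the invariant subspace being zero otherwise), I would dispose of the Kummer factor by writing $\prod_i(s_0\zeta^{j+i}t)^{a_i}=(s_0t)^k\,\zeta^{jk+\sum_i ia_i}$: the sheaf $\LL_{\rho((s_0t)^k)}$ is trivial because $k$ is even, and the residual constant $\rho$-twists $\rho(s_0)^k,\ \rho(\zeta)^{jk},\ \rho(\zeta)^{\sum_i ia_i}$ all equal $1$ — the first two since $k$ is even and $\rho$ has order $2$, the last because the standing hypothesis $\mu_{2(d-1)}\subset\Fq$ places $\zeta$ among the squares, so $\rho(\zeta)=1$. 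Collecting the surviving factors yields $\lambda_j=\psi(kb_0)\,\rho(d(d-1)c_d/2)^k\,g(\psi,\rho)^k$, independent of $j$, which is the scalar in the statement, completing the argument.
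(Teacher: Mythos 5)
Your proposal is correct and follows essentially the same route as the paper: choose the geometric Frobenius inside $D_\infty^{d-1}$, act on each monomial $v_j^{a_0}\cdots v_{j+d-2}^{a_{d-2}}$ through the character of $\LL_{\psi(\sum a_i g(\zeta^{j+i}t))}$ tensored with the constant twists, and observe that the invariance condition forces $\sum a_i g(\zeta^{j+i}t)=kb_0$ while the Kummer factor is trivial for $d$ odd, $k$ even. You simply make explicit two points the paper leaves implicit (the $j$-independence via the expansion of the polynomial, and the disposal of the residual $\rho(\zeta)$ twists using $\mu_{2(d-1)}\subset\Fq$), which is fine.
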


As an immediate consequence we get

\begin{cor}\label{localLfunction} 
The local $L$-function of $\Sym^k\Aif$ at infinity $\det(1-\mathrm{Frob} \> T|(\Sym^k\Aif)^{I_\infty})$ is given by $(1-\psi(kb_0)\rho(d(d-1)c_d/2)^kg(\psi,\rho)^kT)^{\#T_{d-1}(k,J)}$ if $d$ is even, $(1-\psi(kb_0)\rho(d(d-1)c_d/2)^kg(\psi,\rho)^kT)^{\#U_{d-1}(k,J)}$ if $d$ is odd and $k$ is even, and $1$ if $d$ and $k$ are odd.\end{cor}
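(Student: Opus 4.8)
The plan is to deduce the statement directly from the two preceding propositions, since the local $L$-function at infinity is, by definition, the reverse characteristic polynomial of a geometric Frobenius element acting on the inertia invariants. First I would fix notation: write $\Sym^k V$ for the underlying space of $\Sym^k\Aif$ and set $W := (\Sym^k V)^{I_\infty}$ for the subspace of $I_\infty$-invariants, so that the local factor in question is $\det(1 - \mathrm{Frob}\,T \mid W)$. Because $I_\infty$ is normal in the decomposition group $D_\infty$ and a geometric Frobenius $F$ lies in $D_\infty$, conjugation by $F$ preserves $I_\infty$ and hence $F$ stabilizes $W$; this is what makes the expression meaningful, and it is already implicit in the statement of the previous Proposition (which speaks of $F$ acting on $W$).

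Next I would invoke that previous Proposition, which asserts that $F$ acts on all of $W$ by the single scalar $\lambda := \psi(kb_0)\,\rho(d(d-1)c_d/2)^k\,g(\psi,\rho)^k$. Thus the endomorphism of $W$ induced by $F$ is $\lambda\cdot\Id_W$, and the reverse characteristic polynomial of a scalar endomorphism of an $m$-dimensional space is simply $\det(1 - \lambda T\,\Id_W) = (1-\lambda T)^{m}$, where $m = \dim W$. This is the only formal point to record, and it requires no computation beyond the multiplicativity of the determinant on a diagonal matrix.

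Finally I would substitute the value of $\dim W$ supplied by the earlier Proposition: $\dim W = \#T_{d-1}(k,J)$ when $d$ is even, $\dim W = \#U_{d-1}(k,J)$ when $d$ is odd and $k$ is even, and $\dim W = 0$ when $d$ and $k$ are both odd (in which case $(1-\lambda T)^{0} = 1$, recovering the trivial factor). Reading off the three cases gives exactly the displayed formulas. I expect no genuine obstacle at this stage: all the substantive work — the explicit determination of the $I_\infty$-invariants via the generating functions $G_{d-1}$, $H_r$ and the orbit counts $\#T_{d-1}(k,J)$, $\#U_{d-1}(k,J)$, together with the verification that Frobenius acts by the stated scalar — has already been carried out in the two preceding propositions, so the corollary is the purely formal observation that a scalar acting on an $m$-dimensional space contributes the factor $(1-\lambda T)^{m}$.
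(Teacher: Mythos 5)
Your proposal is correct and matches the paper's own reasoning: the corollary is stated there as an immediate consequence of the two preceding propositions, exactly as you argue, with the local factor being $(1-\lambda T)^{\dim W}$ for the scalar $\lambda=\psi(kb_0)\rho(d(d-1)c_d/2)^k g(\psi,\rho)^k$ and the dimension of the invariant subspace supplied case by case. Nothing further is needed.
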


\begin{thm} The polynomial $M_k(f,T)$ decomposes as a product $P_k(f,T)Q_k(f,T)$, where $Q_k(f,T)$ is given by the formula in Corollary \ref{localLfunction} and $P_k(d,T)$ satisfies a functional equation
$$
{P(T)}=cT^r\overline{P(1/q^{k+1}T)}
$$
where $|c|=q^{r(k+1)/2}$ and $r$ is its degree.
\end{thm}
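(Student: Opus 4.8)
The plan is to read the factorization straight off the cohomological description $M_k(f,T)=\det(1-\mathrm{Frob}\,T\mid \HH^1_c(\AAA^1_{\FFq},\Sym^k\Aif))$ from Section 2 (valid here, since the monodromy is not finite, so $\HH^2_c$ vanishes), and then to obtain the functional equation for the interesting factor from purity alone. Write $\mathcal{G}:=\Sym^k\Aif$; this is a lisse sheaf on $\AAA^1$ which is pointwise pure of weight $k$, because $\Aif$ is pure of weight $1$. Let $j:\AAA^1\hookrightarrow\PP^1$ be the inclusion and $i:\{\infty\}\hookrightarrow\PP^1$ the complementary point, so that $\mathcal{G}^{I_\infty}=i^\star j_\star\mathcal{G}$ is exactly the inertia-invariant space whose Frobenius action was determined in the preceding Proposition.

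First I would set up the excision sequence. The short exact sequence of sheaves on $\PP^1$
$$
0\to j_!\mathcal{G}\to j_\star\mathcal{G}\to i_\star(\mathcal{G}^{I_\infty})\to 0
$$
together with $\HH^0(\AAA^1_{\FFq},\mathcal{G})=0$ (for $k\geq 1$ the sheaf $\mathcal{G}$ is geometrically irreducible and nontrivial, hence has no nonzero global sections) makes the long exact cohomology sequence on $\PP^1$ collapse to
$$
0\to \mathcal{G}^{I_\infty}\to \HH^1_c(\AAA^1_{\FFq},\mathcal{G})\to \HH^1(\PP^1_{\FFq},j_\star\mathcal{G})\to 0 .
$$
Since $\mathcal{G}$ is lisse on a curve, $j_\star\mathcal{G}$ is its middle extension and the right-hand term is canonically the interior cohomology $\HH^1_!:=\mathrm{im}(\HH^1_c\to\HH^1)$. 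Applying $\det(1-\mathrm{Frob}\,T\mid-)$ converts this into $M_k(f,T)=Q_k(f,T)\,P_k(f,T)$, where $Q_k(f,T):=\det(1-\mathrm{Frob}\,T\mid\mathcal{G}^{I_\infty})$ and $P_k(f,T):=\det(1-\mathrm{Frob}\,T\mid\HH^1_!)$. By construction $Q_k$ is precisely the local $L$-function at infinity computed in Corollary \ref{localLfunction}, which identifies the first factor.

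Next I would invoke purity. Because $\mathcal{G}$ is pure of weight $k$, Deligne's theory (Weil II) gives that $\HH^1_c$ has weights $\leq k+1$ while $\HH^1$ has weights $\geq k+1$; hence $\HH^1_!$, being simultaneously a quotient of the former and a subspace of the latter, is pure of weight $k+1$. Consequently every reciprocal root $\lambda$ of $P_k(f,T)$ satisfies $|\lambda|=q^{(k+1)/2}$, equivalently $\overline{\lambda}=q^{k+1}/\lambda$. As a consistency check, the Gauss-sum formula of the preceding Proposition shows the reciprocal roots of $Q_k$ all have weight $k$, so this factorization is genuinely the splitting of $\HH^1_c$ into its weight-$k$ and weight-$(k+1)$ pieces.

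Finally, the functional equation is a formal consequence of this purity: writing $P_k(f,T)=\prod_{i=1}^{r}(1-\lambda_i T)$ with all $|\lambda_i|=q^{(k+1)/2}$, substituting $\overline{\lambda_i}=q^{k+1}/\lambda_i$ into $\overline{P_k}(f,1/q^{k+1}T)=\prod_i(1-\tfrac{1}{\lambda_i T})$ and clearing denominators yields $P_k(f,T)=c\,T^{r}\,\overline{P_k(f,1/q^{k+1}T)}$ with $c=(-1)^{r}\prod_i\lambda_i$ and $|c|=q^{r(k+1)/2}$, exactly as asserted. The step demanding the most care is the middle one: correctly identifying the kernel of the forget-supports map with $\mathcal{G}^{I_\infty}$ (so that the first factor is literally the trivial factor of Corollary \ref{localLfunction}) and justifying that $\HH^1(\PP^1_{\FFq},j_\star\mathcal{G})$ is the pure interior cohomology. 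Everything else is bookkeeping, and it is worth emphasizing that no use is made of the autoduality of $\Aif$ (the $\mathrm{SL}$ versus $\mathrm{Sp}$ dichotomy): the conjugate functional equation in the statement follows from weight purity alone, so both parities of $d$ are handled uniformly.
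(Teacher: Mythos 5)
Your proof is correct, and its first half coincides with the paper's: the same excision sequence $0\to j_!\mathcal{G}\to j_\star\mathcal{G}\to i_\star\mathcal{G}^{I_\infty}\to 0$ yields $0\to\mathcal{G}^{I_\infty}\to\HH^1_c(\AAA^1_{\FFq},\mathcal{G})\to\HH^1(\PP^1_{\FFq},j_\star\mathcal{G})\to 0$ and hence the factorization $M_k=Q_kP_k$ with $Q_k$ the local factor of Corollary \ref{localLfunction} (the paper leaves the needed vanishing of $\HH^0(\PP^1,j_\star\mathcal{G})$ implicit; your irreducibility remark supplies it). Where you genuinely diverge is the functional equation. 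The paper does not appeal to Weil II purity: it uses Poincar\'e duality in the form of a perfect Frobenius-equivariant pairing $\HH^1(\PP^1,j_\star\Sym^k\Aif)\times\HH^1(\PP^1,j_\star\Sym^k\widehat{\Aif})\to\QQ(-k-1)$, where $\widehat{\Aif}$ is the dual sheaf, and then observes that $\widehat{\Aif}$ arises from the same construction with $\bar\psi$ in place of $\psi$, so the Frobenius eigenvalues on the dual cohomology are the complex conjugates $\overline{\alpha_i}$; the resulting multiset identity $\{\overline{\alpha_i}\}=\{q^{k+1}/\alpha_i\}$ gives the functional equation, and applying it twice gives $|c|=q^{r(k+1)/2}$. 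Your route instead identifies $\HH^1(\PP^1,j_\star\mathcal{G})$ with the interior cohomology and deduces from the weight bounds on $\HH^1_c$ and $\HH^1$ that it is pure of weight $k+1$, so that $\overline{\alpha_i}=q^{k+1}/\alpha_i$ holds for each root individually. That is a strictly stronger conclusion (a Riemann hypothesis for $P_k$, not merely the symmetry of the multiset of its roots), at the cost of invoking the full weight machinery of Weil II; the paper's argument is lighter, needing only the duality theorem and the equivariance of the whole construction under $\psi\mapsto\bar\psi$. Your closing remark is also consistent with the paper: neither argument uses autoduality of $\Aif$ (the $\mathrm{SL}$ versus $\mathrm{Sp}$ dichotomy), the paper avoiding it precisely by pairing with the conjugate-character sheaf rather than with $\Aif$ itself.
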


\begin{proof}
Let $j:\AAA^1\to\PP^1$ be the inclusion. From the exact sequence
$$
0\to \Sym^k\Aif\to j_\star\Sym^k\Aif\to (j_\star\Sym^k\Aif)_\infty\to 0
$$
we get an exact sequence of $\Gal(\FFq/\Fq)$-modules
$$
0\to (j_\star\Sym^k\Aif)^{I_\infty}\to \HH^1_c(\AAA^1,\Sym^k\Aif)\to\HH^1(\PP^1,j_\star\Sym^k\Aif)\to 0
$$
and therefore a decomposition
\begin{align*}
M_k(f,T) &= \det(1-\text{Frob} \> T|\HH^1_c(\AAA^1,\Sym^k\Aif)) \\
&= \det(1-\text{Frob} \> T|(j_\star\Sym^k\Aif)^{I_\infty}) \det(1-\text{Frob} \> T|\HH^1(\PP^1,j_\star\Sym^k\Aif)).
\end{align*}
The first factor is described by the previous corollary. On the other hand, by \cite[Th\'eor\`eme 1.3]{deligne-dualite} we have a perfect pairing
$$
\HH^1(\PP^1,j_\star\Sym^k\Aif)\times\HH^1(\PP^1,j_\star\Sym^k\widehat{\Aif})\to\QQ(-k-1)
$$
where $\widehat{\Aif}$ is the dual of $\Aif$, which is constructed in the same way as $\Aif$ using the complex conjugate character $\bar\psi$ instead of $\psi$. If the eigenvalues of the action of Frobenius on $\HH^1(\PP^1,j_\star\Sym^k\Aif)$ are $\alpha_1,\cdots,\alpha_r$, so that $P_k(f,T)=\prod(1-\alpha_iT)$, it follows that $\overline{P_k(f,T)}=\prod(1-(q^{k+1}/\alpha_i)T)$ and therefore the functional equation holds. Applying the functional equation twice we get $|c|=q^{r(k+1)/2}$.
\end{proof}

\section{Some special cases}\label{someexamples}

We will now see how the previous results apply to some special values of $f$. First, consider the case $f(t)=t^d$. In this case the equation $f'(t)+u(t)^{d-1}=0$ gives $u(t)=r_0t$, where $r_0^{d-1}=-d$. Then $v(t)=t/r_0$, and $g(t)=f(v(t))+v(t)t^{d-1}=t^d(1/r_0^d+1/r_0)=\frac{d-1}{dr_0}t^d$. By corollary \ref{degreeM}, we get that the degree of $M_k(f;T)$ is the $k$-th coefficient in the power series expansion of
$$
\frac{1}{d-1}\left(\frac{1}{(1-T)^{d-1}}-dF_{d-1}(\{1\};T)\right)
$$
where
$$
F_{d-1}(\{1\};T)=\frac{1}{q}\sum_{\gamma\in{\Fq}}\prod_{j=0}^{d-2}(1-\psi(\gamma\zeta^{j})T)^{-1}.
$$
Explicitly,
$$
\deg M_k(f,T)=\frac{1}{d-1}\left({{k+d-2}\choose{d-2}}-d\cdot\#S_{d-1}(k,\{1\})\right).
$$

In particular, for $d=3$
$$
F_2(\{1\};T)=\frac{1}{q}\sum_{\gamma\in{\Fq}}(1-\psi(\gamma)T)^{-1}(1-\psi(-\gamma)T)^{-1}=\frac{1}{p}\sum_{m=0}^{p-1}(1-\exp(\frac{2\pi i m}{p})T)^{-1}(1-\exp(\frac{-2\pi i m}{p})T)^{-1}.
$$
It is easily checked that $S_2(k,\{1\}):=\{(a,b)|a+b=k,a\equiv b (\mod p)\}$ has $\lfloor\frac{k}{p}\rfloor+\delta$ elements, where $\delta=0$ (resp. $\delta=1$) if $k-\lfloor\frac{k}{p}\rfloor$ is odd (resp. even). So in this case we get an explicit formula for the degree:
$$
\deg M_k(f(t)=t^3;T)=\frac{1}{2}\left(k+1-3\left(\left\lfloor\frac{k}{p}\right\rfloor +\delta\right)\right)
$$
If $p>k$ this gives $(k+1)/2$ for $k$ odd and $(k-2)/2$ for $k$ even.

Corollary \ref{localLfunction} states for $f(t)=t^d$ that the local $L$-function of $\Sym^k\Aif$ at infinity is  $(1-\rho(d(d-1)/2)^kg(\psi,\rho)^kT)^{\#T_{d-1}(k,J)}$ if $d$ is even, $(1-\rho(d(d-1)/2)^kg(\psi,\rho)^kT)^{\#U_{d-1}(k,J)}$ if $d$ is odd and $k$ is even and $1$ if $d$ and $k$ are odd. For $d=3$, we can again provide a more explicit expression.

Since $3$ is odd, the local $L$-function is $1$ for $k$ odd. For $k$ even, we can write $\#S_2(k,\{1\})=\lfloor\frac{k}{p}\rfloor+\delta=2\lfloor\frac{k}{2p}\rfloor+1$. Every orbit of ${\mathbb Z}/2{\mathbb Z}$ acting on $S_2(k,\{1\})$ has two elements except for $\{(k/2,k/2)\}$, so $\#T_2(k,\{1\})=\lfloor\frac{k}{2p}\rfloor+1$. $U_2(k,\{1\})$ contains the orbits such that $rk$ is a multiple of $4$. If $k\equiv 0(\mbox{mod }4)$ this includes all orbits. If $k\equiv 2(\mbox{mod }4)$ the orbit $\{(\frac{k}{2},\frac{k}{2})\}$ must be excluded. So the trivial factor for $k$ even is
$$
\begin{array}{ll}
(1-g(\psi,\rho)^kT)^{\lfloor\frac{k}{2p}\rfloor} & \mbox{for }k\equiv 2(\mbox{mod }4)  \vspace{5pt} \\
(1-g(\psi,\rho)^kT)^{\lfloor\frac{k}{2p}\rfloor+1} & \mbox{for }k\equiv 0(\mbox{mod }4) \end{array}
$$
 In particular, for $p>\frac{k}{2}$ the trivial factor of $M_k(t^3,T)$ is $1$ if $k\equiv 2(\mbox{mod }4)$ and $(1-g(\psi,\rho)^kT)$ if $k\equiv 0(\mbox{mod }4)$.

\bigskip

We will now consider the case where $g(t)=\sum b_i t^i$ has $b_i\neq 0$ for $i=1,\ldots,d-2$. This includes the generic case where all coefficients of $g(t)$ are non-zero as a special case. Suppose first that $b_{d-1}=0$ (or, equivalently, that $c_{d-1}=0$). $S_{d-1}(k,J)$ is the set of all $(a_0,\ldots,a_{d-2})\in\Z^{d-1}_{\geq 0}$ such that $\sum a_i=k$ and $\sum a_i\zeta^{ji}=0$ for all $j=1,\ldots,d-2$. The system of equations $\{\sum_i\zeta^{ij}x_i=0|j=1,\ldots,d-2\}$ has rank $d-2$ (since the $(d-2)\times(d-2)$ minors are Vandermonde determinants) and has $(1,1,\ldots,1)$ as a solution, so all solutions must be of the form $(a,a,\ldots,a)$ modulo $p$ for some $a$. Therefore
\begin{align*}
F_{d-1}(J;T) :&= \sum_{k=0}^\infty \# S_{d-1}(k,J) T^k \\
&= \sum_{r=0}^{p-1}\sum_{a_0,\ldots,a_{d-2}=0}^\infty T^{(r+s_0p)+\cdots +(r+s_{d-2}p)} \\
&= \sum_{r=0}^{p-1} T^{(d-1)r}\sum_{a_0,\ldots,a_{d-2}=0}^{\infty} T^{p(a_0+\cdots+a_{d-2})} \\
&=\frac{1-T^{(d-1)p}}{(1-T^p)^{d-1}(1-T^{d-1})}
\end{align*}

Suppose now that $b_{d-1}\neq 0$ (or, equivalently, that $c_{d-1}\neq 0$). Making the change of variable $\hat f(t)=f(t-\frac{c_{n-1}}{nc_n})$ we eliminate the degree $d-1$ term. Moreover, $\mathrm{Ai}_{\hat f}=\R^1\pi_{t!}\LL_{\psi(f(x-\frac{c_{n-1}}{nc_n})+tx)}=\R^1\pi_{t!}\LL_{\psi(f(x)+t(x+\frac{c_{n-1}}{nc_n}))}=\Aif\otimes\LL_{\psi(\frac{c_{n-1}}{nc_n}t)}$ and thus $\Sym^k\Aif=(\Sym^k{\mathrm Ai}_{\hat f})\otimes\LL_{\psi(-\frac{c_{n-1}}{nc_n}t)}^{\otimes k}$. As a representation of $D_\infty$, we have then $\Aif=[d-1]_\star(\LL_{\psi(\hat g(t))}\otimes\LL_{\rho^d(s_0t)})\otimes\LL_{\rho(d(d-1)c_d/2)}\otimes{\mathcal Q}\otimes\LL_{\psi(-\frac{c_{n-1}}{nc_n}t)}=[d-1]_\star(\LL_{\psi(\hat g(t)-\frac{c_{n-1}}{nc_n}t^{d-1})}\otimes\LL_{\rho^d(s_0t)})\otimes\LL_{\rho(d(d-1)c_d/2)}\otimes{\mathcal Q}$. In other words, $g(t)=\hat g(t)-\frac{c_{n-1}}{nc_n}t^{d-1}$.

If $p$ divides $k$, the condition $\sum_i a_i\zeta^{ij}$ for $j=d-1$ is void, so both the dimension of $M_k(f;T)$ and the trivial factor at infinity behave as in the $b_{d-1}=0$ case. If $p$ does not divide $k$, the condition $\sum_i a_i\zeta^{ij}$ does never hold for $j=d-1$, so $S_{d-1}(k,J_{\geq j})=\emptyset$ for $j=1,\ldots,d-1$. In particular, the trivial factor of $M_k(f;T)$ is $1$. Furthermore, applying the formula for the degree, we get
$$
\deg M_k(f,T)=\frac{1}{d-1}\left({{k+d-2}\choose{d-2}}-\#S_{d-1}(k,\{1\})\right).
$$

\bigskip

As a final example, suppose that $d-1$ is prime and $p$ is a multiplicative generator of ${\mathbb F}_{d-1}$. In this case, all non-trivial $(d-1)$-th roots of unity are conjugate over $\Fp$, so $a_0+a_1\zeta+\cdots+a_{d-2}\zeta^{d-2}=0$ if and only if $a_0+a_1\zeta^j+\cdots+a_{d-2}\zeta^{(d-2)j}=0$ for any $j=1,2,\ldots,d-2$. Therefore $S_{d-1}(k,\{1\})=S_{d-1}(k,J)$ for every $J\subset{\mathbb Z}$ such that $J\cap (d-1){\mathbb Z}=\emptyset$. As in the previous example, we conclude that, if $c_{d-1}=0$,
$$
F_{d-1}(J_{\geq j};T)=\frac{1-T^{(d-1)p}}{(1-T^p)^{d-1}(1-T^{d-1})}
$$
for \emph{every} $j\in J$. By corollary \ref{degreeM}, the degree of $M_k(f;T)$ is the $k$-th coefficient of the power series expansion of
$$
\frac{1}{(d-1)(1-T)^{d-1}}-\frac{1}{d-1}\cdot \frac{1-T^{(d-1)p}}{(1-T^p)^{d-1}(1-T^{d-1})}\sum_{j\in J} h(j)=\frac{1}{(d-1)(1-T)^{d-1}}-\frac{d}{d-1}\cdot \frac{1-T^{(d-1)p}}{(1-T^p)^{d-1}(1-T^{d-1})}.
$$

If $c_{d-1}\neq 0$ we have, as in the previous example, the same formula for the degree if $k$ is a multiple of $p$, and the $k$-th coefficient in the power series expansion of 
$$
\frac{1}{(d-1)(1-T)^{d-1}}-\frac{1}{d-1}\cdot \frac{1-T^{(d-1)p}}{(1-T^p)^{d-1}(1-T^{d-1})}
$$
if $k$ is prime to $p$.

\bibliographystyle{amsplain}
\bibliography{References,bibliografia}

\providecommand{\bysame}{\leavevmode\hbox to3em{\hrulefill}\thinspace}
\providecommand{\MR}{\relax\ifhmode\unskip\space\fi MR }
\providecommand{\MRhref}[2]{%
  \href{http://www.ams.org/mathscinet-getitem?mr=#1}{#2}
}
\providecommand{\href}[2]{#2}
\begin{thebibliography}{10}

\bibitem{abbes-saito}
Ahmed Abbes and Takeshi Saito, \emph{Local {F}ourier transform and epsilon
  factors}, arXiv:0809.0180v1 [math.AG] (2008).

\bibitem{Adolphson-$p$-adictheoryof-1976}
Alan Adolphson, \emph{A {$p$}-adic theory of {H}ecke polynomials}, Duke Math.
  J. \textbf{43} (1976), no.~1, 115--145.

\bibitem{Adolphson-distributionofangles-1989}
\bysame, \emph{On the distribution of angles of {K}loosterman sums}, J.
  {R}eine. {A}ngew. Math. \textbf{395} (1989), 214--220.

\bibitem{CHT}
Laurent Clozel, Michael Harris, and Richard Taylor, \emph{Automorphy for some
  {$l$}-adic lifts of automorphic mod {$l$} {G}alois representations}, Publ.
  Math. Inst. Hautes \'Etudes Sci. (2008), no.~108, 1--181, With Appendix A,
  summarizing unpublished work of Russ Mann, and Appendix B by Marie-France
  Vign{\'e}ras. \MR{MR2470687}

\bibitem{Deligne}
Pierre Deligne, \emph{Formes modulaires et représentations $l$-adiques},
  S\'eminaire Bourbaki \textbf{11} (1968), no.~355, 34.

\bibitem{deligne-traces}
\bysame, \emph{Application de la formule des traces aux sommes
  trigonom{\'e}triques}, Lecture Notes in Mathematics, vol. 569, pp.~168--232,
  Springer-Verlag, 1977.

\bibitem{deligne-dualite}
\bysame, \emph{Dualit\'e}, Lecture Notes in Mathematics, vol. 569,
  pp.~154--167, Springer-Verlag, 1977.

\bibitem{Dwork-Heckepolynomials-1971}
Bernard Dwork, \emph{On {H}ecke polynomials}, Inventiones math. \textbf{12}
  (1971), 249--256.

\bibitem{fu2008}
Lei Fu, \emph{Calculation of $\ell$-adic {L}ocal {F}ourier {T}ransformations},
  arXiv: 0702436 [math.AG] (2007).

\bibitem{FuWan-$L$-functionssymmetricproducts-2005}
Lei Fu and Daqing Wan, \emph{{$L$}-functions for symmetric products of
  {K}loosterman sums}, J. Reine Angew. Math. \textbf{589} (2005), 79 -- 103.

\bibitem{FuWan-TrivialfactorsL-functions-}
\bysame, \emph{Trivial factors for {$L$}-functions of symmetric products of
  {K}loosterman sheaves}, Finite Fields Appl. \textbf{14} (2008), no.~2,
  549--570.

\bibitem{GouveaMazur}
F.~Gouv{\^e}a and B.~Mazur, \emph{Families of modular eigenforms}, Math. Comp.
  \textbf{58} (1992), no.~198, 793--805. \MR{MR1122070 (93d:11049)}

\bibitem{Haessig-$L$-functionsofsymmetric-}
C.~Douglas Haessig, \emph{{$L$}-functions of symmetric powers of cubic
  exponential sums}, J. Reine Angew. Math. \textbf{631} (2009), 1--57.

\bibitem{HST}
M.~{H}arris, N.~{S}hepherd {B}arron, and R.~{T}aylor, \emph{A family of
  {C}alabi-yau varieties and potential automorphy}, Annals of Math (2010),
  no.~171, 779--813.

\bibitem{hong2001newton}
S.~Hong, \emph{Newton polygons of {L} functions associated with exponential
  sums of polynomials of degree four over finite fields}, Finite Fields and
  Their Applications \textbf{7} (2001), no.~1, 205--237.

\bibitem{Katz-monodromygroupsattached-1987}
Nicholas~M. Katz, \emph{On the monodromy groups attached to certain families of
  exponential sums}, Duke Math. J. \textbf{54} (1987), no.~1, 41--56.

\bibitem{Katz-GaussSumsKloosterman-1988}
\bysame, \emph{{G}auss {S}ums, {K}loosterman {S}ums, and {M}onodromy {G}roups},
  Annals of Mathematics Studies, vol. 116, Princeton University Press, 1988.

\bibitem{katz-esde}
\bysame, \emph{Exponential {S}ums and {D}ifferential {E}quations}, Annals of
  Mathematics Studies, vol. 124, Princeton University Press, 1990.

\bibitem{Katz-$G_2$andHypergeometric-2007}
\bysame, \emph{${G}_2$ and {H}ypergeometric {S}heaves}, Finite Fields Appl.
  \textbf{13} (2007), no.~2, 175--223.

\bibitem{laumon87}
G\'erard Laumon, \emph{Transformation de {F}ourier, constantes d'\'equations
  fonctionnelles, et conjecture de {W}eil}, Publ. Math. IHES (1987), no.~65,
  131--210.

\bibitem{Robba-SymmetricPowersof-1986}
Philippe Robba, \emph{Symmetric powers of the $p$-adic {B}essel equation}, J.
  Reine Angew. Math. \textbf{366} (1986), 194 -- 220.

\bibitem{ScholtenZhu}
Jasper Scholten and Hui~June Zhu, \emph{The first slope case of {W}an's
  conjecture}, Finite Fields Appl. \textbf{8} (2002), no.~4, 414--419.

\bibitem{sperber1986p}
S.~Sperber, \emph{On the p-adic theory of exponential sums}, American Journal
  of Mathematics \textbf{108} (1986), no.~2, 255--296.

\bibitem{such2000monodromy}
Ondrej {\v{S}}uch, \emph{Monodromy of {A}iry and {K}loosterman sheaves}, Duke
  Mathematical Journal \textbf{103} (2000), no.~3, 397--444.

\bibitem{T}
R.~Taylor, \emph{Automorphy for some $\ell$-adic lifts of automorphic mod
  $\ell$ representations. {II}}, Pub. Math. IHES \textbf{108} (2008), 183--239.

\bibitem{Wan-Dimensionvariationof-1998}
D.~Wan, \emph{Dimension variation of classical and $p$-adic modular forms},
  Invent. Math. \textbf{133} (1998), 469--498.

\bibitem{Wan-Dwork'sconjectureunit-1999}
\bysame, \emph{{D}work's conjecture on unit root zeta functions}, Ann. Math.
  \textbf{150} (1999), 867--927.

\bibitem{Wan-Higherrankcase-2000}
\bysame, \emph{Higher rank case of {D}work's conjecture}, J. Amer. Math. Soc.
  \textbf{13} (2000), 807--852.

\bibitem{Wan-Rankonecase-2000}
\bysame, \emph{Rank one case of {D}work's conjecture}, J. Amer. Math. Soc.
  \textbf{13} (2000), 853--908.

\end{thebibliography}

\end{document}